\newtheorem{theorem}{Theorem}[section]
\newtheorem{Theorem}{Theorem}
\newtheorem{lemma}[theorem]{Lemma}
\newtheorem{proposition}[theorem]{Proposition}
\newtheorem{definition}[theorem]{Definition}
\def\Z{\mathbb{Z}}
\def\F{\mathbb{F}}
\def\CFKi{CFK^{\infty}}
\title{On the concordance genus of topologically slice knots}
\subjclass[2009]{}
\author{Jennifer Hom}
\address{Department of Mathematics, Columbia University, New York, NY 10027
\newline\indent{\tt hom@math.columbia.edu}}
\numberwithin{equation}{section}
\begin{document}

\begin{abstract}
The concordance genus of a knot $K$ is the minimum Seifert genus of all knots smoothly concordant to $K$. Concordance genus is bounded below by the $4$-ball genus and above by the Seifert genus. We give a lower bound for the concordance genus of $K$ coming from the knot Floer complex of $K$. As an application, we prove that there are topologically slice knots with $4$-ball genus equal to one and arbitrarily large concordance genus. 
\end{abstract}

\maketitle

\section{Introduction}

The \emph{concordance genus} of a knot $K$, $g_c(K)$, is the minimum genus of all knots smoothly concordant to $K$. The concordance genus is bounded below by the $4$-ball genus and above by the genus; that is,
\[ g_4(K) \leq g_c(K) \leq g(K). \]
Note that taking the connected sum with a slice knot does not change the value of $g_c$, but increases the genus. In this manner, the gap between $g_c(K)$ and $g(K)$ can be made arbitratily large. For many knots, $g_4(K)=g_c(K)$. For example, a consequence of the Milnor conjecture, first proved by Kronheimer and Mrowka \cite{KronMrowka}, is that $g_4(K)=g_c(K)=g(K)$ for torus knots. 

In \cite[Problem 14]{Gordonproblems}, Gordon asks if $g_4(K)=g_c(K)$ in general. Nakanishi \cite{Nakanishi} answered the question in the negative, using Alexander polynomials to show that the gap between $g_4(K)$ and $g_c(K)$ can be arbitrarily large. The more subtle question of whether there are algebraically slice knots for which the gap between $g_4(K)$ and $g_c(K)$ can be arbitrarily large was answered by Livingston in \cite{Livingstonconcordancegenus}, where he used Casson-Gordon invariants to find algebraically slice knots with $4$-ball genus equal to one and arbitrarily large concordance genus. 

Neither the Alexander polynomial nor Casson-Gordon invariants suffice to extend these results to topologically slice knots. In this paper, we give a lower bound for $g_c(K)$ coming from the knot Floer complex of $K$, and use this bound to give a family of topologically slice knots with smooth $4$-ball genus equal to one and arbitrarily large concordance genus.

To a knot $K$ in $S^3$, Ozsv\'ath and Szab\'o \cite{OSknots}, and independently Rasmussen \cite{R}, associate a $\Z \oplus \Z$-filtered chain complex, $\CFKi(K)$, whose filtered chain homotopy type is an invariant of $K$. Associated to this chain complex are several concordance invariants; in this paper, we focus on the invariant $\varepsilon(K)$, a $\{-1, 0, 1\}$-valued invariant defined in \cite{Homsmooth}, and to a lesser extent, the invariant $\tau(K)$, defined in \cite{OS4ball}. Both $\varepsilon$ and $\tau$ are defined by studying certain natural maps on homology induced by inclusions and projections of appropriate subquotient complexes of $\CFKi(K)$.

We say that two $\Z \oplus \Z$-filtered chain complexes, $C_1$ and $C_2$, are \emph{$\varepsilon$-equivalent} if
\[ \varepsilon(C_1 \otimes C_2^*)=0,\]
where $C^*$ denotes the dual of $C$.
We say that two knots, $K_1$ and $K_2$, are \emph{$\varepsilon$-equivalent} if their knot Floer complexes are $\varepsilon$-equivalent, that is, if
\[\ \varepsilon \big(\CFKi(K_1) \otimes \CFKi(K_2)^* \big)=0. \]
As seen in the following theorem, $\varepsilon$-equivalence is closely related to concordance:
\begin{Theorem}[{\cite{Homsmooth}}]
\label{thm:epsilonequivalent}
If two knots are concordant, then they are $\varepsilon$-equivalent.
\end{Theorem}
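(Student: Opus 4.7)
The plan is to reduce the statement to the already-established fact (from \cite{Homsmooth}) that $\varepsilon$ is a smooth concordance invariant that vanishes on slice knots. The key observation is that $\varepsilon$-equivalence of $K_1$ and $K_2$, as defined, is precisely the statement $\varepsilon\bigl(\CFKi(K_1)\otimes \CFKi(K_2)^*\bigr)=0$, and this tensor product is exactly the knot Floer complex of the connected sum $K_1 \# (-K_2)$, where $-K_2$ denotes the concordance inverse (mirror of reverse) of $K_2$.

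First I would recall two structural properties of $\CFKi$ due to Ozsv\'ath--Szab\'o and Rasmussen that are surely used in \cite{Homsmooth}: the K\"unneth formula $\CFKi(K_1 \# K_2)\simeq \CFKi(K_1)\otimes \CFKi(K_2)$ up to filtered chain homotopy equivalence, and the identification $\CFKi(-K)\simeq \CFKi(K)^*$, again up to filtered chain homotopy equivalence. Combining these, I would write
\[
\CFKi(K_1 \# (-K_2)) \simeq \CFKi(K_1)\otimes \CFKi(K_2)^*.
\]

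Next, I would use the assumption that $K_1$ and $K_2$ are (smoothly) concordant to conclude that $K_1 \# (-K_2)$ is smoothly slice. Since $\varepsilon$ depends only on the filtered chain homotopy type of $\CFKi$, the displayed equivalence above gives
\[
\varepsilon\bigl(\CFKi(K_1)\otimes \CFKi(K_2)^*\bigr) \;=\; \varepsilon\bigl(K_1 \# (-K_2)\bigr).
\]
Finally, I would invoke the fact from \cite{Homsmooth} that $\varepsilon$ is a smooth concordance invariant with $\varepsilon(U)=0$ for the unknot $U$, so that $\varepsilon$ vanishes on any smoothly slice knot; applied to $K_1\#(-K_2)$ this gives the desired conclusion.

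There is no real obstacle here beyond correctly citing the inputs: the main content of Theorem \ref{thm:epsilonequivalent} is really the concordance invariance of $\varepsilon$ itself, together with the two formal properties of $\CFKi$ under connected sum and mirroring. The only thing to be careful about is sign/orientation conventions for $-K$ and for the duality $\CFKi(-K)\simeq \CFKi(K)^*$, so that the tensor product appearing in the definition of $\varepsilon$-equivalence really matches $\CFKi(K_1\#(-K_2))$ up to filtered chain homotopy equivalence.
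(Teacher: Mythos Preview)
Your argument is correct and is exactly the standard reduction: identify $\CFKi(K_1)\otimes\CFKi(K_2)^*$ with $\CFKi(K_1\#-K_2)$ via the K\"unneth and duality formulas, observe that concordance of $K_1$ and $K_2$ makes $K_1\#-K_2$ slice, and then invoke the vanishing of $\varepsilon$ on slice knots. Note, however, that the paper itself does not supply a proof of this theorem at all---it is simply quoted as \cite[Theorem~2]{Homsmooth}---so there is no ``paper's own proof'' to compare against; your sketch is the intended content behind that citation.
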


\noindent We define the \emph{breadth} of a $\Z \oplus \Z$-filtered chain complex $C$, $b(C)$, to be
\[ b(C)=\textup{max} \{ j \ | \ H_*(C(0, j)) \neq 0 \}, \]
where $C(i, j)$ denotes the $(i, j)$-graded summand of the associated graded complex. Recall from \cite[Theorem 1.2]{OSgenusbounds} that
\[ g(K)=b(\CFKi(K)). \]
The invariant $\gamma(K)$ is defined to be the minimum breadth of all filtered chain complexes $\varepsilon$-equivalent to $\CFKi(K)$:
\[ \gamma(K)= \textup{min} \{ b(C) \ | \ \varepsilon(\CFKi(K) \otimes C^*)=0 \}.\]

\begin{Theorem}
\label{thm:gamma}
The invariant $\gamma(K)$ gives a lower bound on the smooth concordance genus of $K$; that is,
\[g_c(K) \geq \gamma(K).\]
\end{Theorem}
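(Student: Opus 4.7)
The plan is to reduce the statement to a direct application of Theorem \ref{thm:epsilonequivalent} together with the genus detection property of $\CFKi$. I would start by letting $K'$ be a knot smoothly concordant to $K$ that realizes the concordance genus, so that $g(K') = g_c(K)$. Since $g_c(K)$ is a minimum of non-negative integers, such a $K'$ exists.

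The next step is to observe that $\CFKi(K')$ is admissible in the infimum defining $\gamma(K)$. Indeed, by Theorem \ref{thm:epsilonequivalent}, concordant knots are $\varepsilon$-equivalent, so
\[ \varepsilon\bigl(\CFKi(K) \otimes \CFKi(K')^*\bigr) = 0. \]
Therefore $C = \CFKi(K')$ is one of the filtered complexes competing in the defining minimum of $\gamma(K)$, which gives
\[ \gamma(K) \leq b\bigl(\CFKi(K')\bigr). \]
Invoking the genus detection theorem \cite[Theorem 1.2]{OSgenusbounds} recalled in the introduction, $b(\CFKi(K')) = g(K') = g_c(K)$, and chaining these inequalities yields $\gamma(K) \leq g_c(K)$.

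There is no genuine obstacle here once the definitions are unwound; the entire argument is really packaged inside Theorem \ref{thm:epsilonequivalent} and the definition of $\gamma$. The only bookkeeping to be careful about is matching the two notions of $\varepsilon$-equivalence (for knots versus for abstract filtered complexes), but these are set up precisely so that a concordant knot supplies a valid test complex $C$ in the infimum for $\gamma(K)$. In this sense the theorem is essentially a formal consequence of having built $\gamma$ so as to refine the invariant $g \circ (\text{concordance class})$ at the level of $\varepsilon$-equivalence classes of filtered complexes.
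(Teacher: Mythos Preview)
Your argument is correct and is essentially the same as the paper's own proof: both use Theorem~\ref{thm:epsilonequivalent} to see that a genus-minimizing concordant knot $K'$ contributes $\CFKi(K')$ to the defining minimum of $\gamma(K)$, and then invoke $b(\CFKi(K'))=g(K')$ from \cite[Theorem~1.2]{OSgenusbounds}. You have simply made the choice of $K'$ explicit, which the paper leaves implicit.
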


\noindent At this first glance, this may seem like an intractable invariant, as the set of chain complexes $\varepsilon$-equivalent to $\CFKi(K)$ is infinite. However, in many situations, there are tractable numerical invariants associated to the $\varepsilon$-equivalence class of $K$ giving lower bounds for $\gamma(K)$, and hence also for $g_c(K)$. In this next theorem, we use these bounds to prove a result concerning the concordance genus of a family of topologically slice knots.

Let $D$ denote the (positive, untwisted) Whitehead double of the right-handed trefoil, and let $K_{p,q}$ denote the $(p, q)$-cable of $K$, where $p$ indicates the longitudinal winding and $q$ the meridional winding. We write $-K$ to denote the reverse of the mirror of $K$.

\begin{Theorem}
\label{thm:theknots}
Let $K_p=D_{p, 1} \# -D_{p-1, 1}$. Then $K_p$ is topologically slice with $g_4(K_p)=1$ and $g_c(K_p)\geq p$.
\end{Theorem}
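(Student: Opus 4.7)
The theorem bundles three claims, which I would address in order of increasing depth.

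Topological sliceness follows from Freedman. Since $D$ is a Whitehead double, $\Delta_D(t)=1$; the $(p,1)$-cable pattern is the unknotted $(p,1)$-torus knot, so the satellite formula for the Alexander polynomial gives $\Delta_{D_{p,1}}(t)=\Delta_D(t^p)=1$ and analogously $\Delta_{D_{p-1,1}}(t)=1$. By Freedman's theorem both summands of $K_p$ are topologically slice, hence so is $K_p$.

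The lower bound in $g_4(K_p)=1$ is immediate from $|\tau|\leq g_4$ together with Hedden's cabling formula: because $\tau(D)=g(D)=1$, we have $\tau(D_{p,1})=p$ and $\tau(D_{p-1,1})=p-1$, so $\tau(K_p)=1$. For the upper bound I would produce a genus-one cobordism in $S^3\times[0,1]$ between $D_{p,1}$ and $D_{p-1,1}$ by a standard band-and-cap construction near the companion $D$ that changes the winding number of the cable pattern by one, and then convert it to a genus-one surface in $B^4$ bounded by $K_p=D_{p,1}\#-D_{p-1,1}$.

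The central claim is $g_c(K_p)\geq p$. By Theorem \ref{thm:gamma}, it reduces to showing $\gamma(K_p)\geq p$, namely that every $\Z\oplus\Z$-filtered complex $C$ with $\varepsilon(\CFKi(K_p)\otimes C^*)=0$ has breadth at least $p$. My strategy is to isolate a numerical invariant $\phi$ of the $\varepsilon$-equivalence class satisfying $\phi(K_p)\geq p$ together with $\phi(C)\leq b(C)$. Candidates arise by tensoring with auxiliary complexes: quantities of the form $\tau$ or $\nu^+$ applied to $\CFKi(K)\otimes\CFKi(T)$ for a fixed test complex $T$ are $\varepsilon$-equivalence invariants and can be bounded in terms of the breadths of the factors. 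Using the cabling formulas for $\CFKi$ of Whitehead doubles, due to Hedden for $\tau$ and Hom for $\varepsilon$, I expect the $\varepsilon$-class of $\CFKi(D_{p,1})$ to contain a staircase summand of vertical height $p$; tensoring with $\CFKi(D_{p-1,1})^*$ collapses the $\tau$ contribution down to $1$ but should preserve the height-$p$ staircase, producing the invariant $\phi$ with $\phi(K_p)\geq p$.

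The principal obstacle is precisely this last step: the mismatched cable parameters $p$ and $p-1$ are engineered so that $\tau$ cancels while the finer filtration data of $\CFKi$ does not, and verifying the non-cancellation requires explicit control of $\CFKi(D_{p,1})$ up to $\varepsilon$-equivalence. Building the right invariant $\phi$ from this structural information, and confirming that it genuinely descends to a lower bound on $\gamma$, is the technical heart of the argument.
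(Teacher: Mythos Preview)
Your treatment of topological sliceness and of $g_4(K_p)=1$ is fine and close to the paper's. For the upper bound on $g_4$, the paper gives a slightly different but equivalent construction: it builds explicit Seifert surfaces for $D_{p,1}$ and $-D_{p-1,1}$, bands them together, and surgers along the slice knot $D_{p-1,1}\#-D_{p-1,1}$ sitting on the resulting surface. Your cobordism argument works just as well.

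The gap is in the concordance-genus bound. You correctly recognize that one needs an $\varepsilon$-equivalence invariant $\phi$ with $\phi\leq b$, but your candidates (values of $\tau$ or $\nu^+$ on tensors with test complexes, or a ``staircase of vertical height $p$'') are not what the paper uses and you have not shown they produce the needed bound. The paper's invariant is concrete: it is $|\tau-a_1-a_2|$, where $a_1$ and $a_2$ are the secondary invariants of \cite{Homsmooth} recording the length of the first horizontal arrow out of the distinguished generator and then the first vertical arrow (Lemma~\ref{lem:basis}). Lemma~\ref{lem:gamma} shows $\gamma(K)\geq|\tau(K)-a_1(K)-a_2(K)|$ because the endpoint of that hook sits at Alexander grading $\tau-a_1-a_2$. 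This is not a staircase; it is a single right-angle hook, and the relevant length is $a_2$, not the height of a staircase.

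The second missing ingredient is the computation. The paper does not compute $\CFKi(D_{p,1})$ directly; it uses that $D$ is $\varepsilon$-equivalent to $T_{2,3}$ (so $D_{p,1}$ is $\varepsilon$-equivalent to $T_{2,3;p,1}$), and then applies Petkova's bordered computation of $\widehat{CFK}$ for cables of thin knots to find $a_1(D_{p,1})=1$ and $a_2(D_{p,1})=p$. Finally, a connected-sum lemma from \cite{Homsmooth} (if $a_1(J)=a_1(K)$ and $a_2(J)>a_2(K)$ then $a_1(J\#-K)=a_1(J)$ and $a_2(J\#-K)=a_2(J)$) transfers these values to $K_p$, yielding $|\tau-a_1-a_2|=|1-1-p|=p$. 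Your sketch anticipates the non-cancellation phenomenon but does not supply either the specific invariant that detects it or the mechanism (reduction to $T_{2,3}$ and bordered Floer) that makes the computation feasible.
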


\noindent In \cite[Theorem 1.5]{Livingstonconcordancegenus}, Livingston constructs algebraically slice knots with $4$-ball genus equal to one and arbitrarily large concordance genus. However, his proof relies on Casson-Gordon invariants, and so his examples are not topologically slice. He also remarks on the inherent challenge in bounding the concordance genus: one must show that the given knot is not concordant to any knot in the infinite family of knots with genus less than a given $N$. The invariant $\gamma$ can help significantly in this regard. Moreover, the invariant $\gamma$ can give useful bounds on the concordance genus of topologically slice knots, while the techniques of \cite{Livingstonconcordancegenus} cannot.

\vspace{.5cm}
\noindent \textbf{Organization.} In Section \ref{sec:bounding}, we recall the necessary properties of Heegaard Floer homology and knot Floer homology, and use them to prove Theorem \ref{thm:gamma}. In Section \ref{sec:theknots}, we apply those results to give a family of topologically slice knots with $4$-ball genus one and arbitrarily large concordance genus.

We work with coefficients in $\F=\Z/2\Z$ throughout. Unless otherwise stated, we work in the smooth category.

\vspace{.5cm}
\noindent \textbf{Acknowledgements.} I would like to thank Chuck Livingston and Peter Horn for many helpful conversations.

\section{Bounding the concordance genus}
\label{sec:bounding}

We recall the basic definitions of knot Floer homology, assuming that the reader is familiar with these invariants; for an expository overview, we suggest \cite{OSsurvey}. In this paper, we concern ourselves primarily with the algebraic properties of the invariant.

To a knot $K \subset S^3$, Ozsv\'ath-Szab\'o \cite{OSknots}, and independently Rasmussen \cite{R}, associate $\CFKi(K)$, a $\Z$-graded, $\Z$-filtered freely generated chain complex over the ring $\F[U, U^{-1}]$, where $U$ is a formal variable. The filtered chain homotopy type of $\CFKi(K)$ is an invariant of the knot $K$. The differential does not decrease the $U$-exponent, and the $U$-exponent (more precisely, the negative of the $U$-exponent) induces a second $\Z$-filtration, giving $\CFKi(K)$ the structure of a $\Z \oplus \Z$-filtered chain complex. The ordering on $\Z \oplus \Z$ is given by $(i, j) \leq (i', j')$ if $i \leq i'$ and $j \leq j'$.

This chain complex is freely generated over $\F [U, U^{-1}]$ by tuples of intersection points in a doubly pointed Heegaard diagram for $S^3$ compatible with the knot $K$. Each generator $x$ comes with a homological, or Maslov grading, $M(x)$, and an Alexander filtration, $A(x)$. The differential, $\partial$, 
decreases the Maslov grading by one, and respects the Alexander filtration; that is,
\[ M(\partial x) = M(x) -1 \qquad \textup{and} \qquad A(\partial x ) \leq A(x).\]
Multiplication by $U$ shifts the Maslov grading by two and the Alexander filtration by one:
\[ M(U \cdot x) = M(x) -2 \qquad \textup{and} \qquad A(U \cdot x) = A(x) -1. \]
It is often convenient to graphically represent this complex in the $(i, j)$-plane, where the $i$-axis corresponds to $-(U\textup{-exponent})$, and the $j$-axis corresponds to the Alexander filtration. The Maslov grading is suppressed from this picture. A generator $x$ is placed at $(0, A(x))$, and a element of the form $U^n \cdot x$ is placed at $(-n, A(x)-n)$.

Given a $\Z \oplus \Z$-filtered chain complex $C$ and $S \subset \Z \oplus \Z$, we write $C\{ S \}$ to denote the set of elements in the plane whose $(i, j)$-coordinates are in $S$ together with the arrows between them. If $S$ has the property that $(i, j) \in S$ implies that $(i', j') \in S$ for all $(i', j') \leq (i, j)$, then $C\{S\}$ is a subcomplex of $C$. We write $C(i, j)$ to denote the subquotient complex with coordinates $(i, j)$, that is, $C \{ (i, j) \}$.

The $\Z$-filtered complex $\widehat{CFK}(K)$ is the subquotient complex consisting of the $j$-axis, i.e., $C\{ i \leq 0 \}/C\{ i<0 \}$. The homology of the associated graded object of $\widehat{CFK}(K)$ is $\widehat{HFK}(K)$. The groups $\widehat{HFK}(K)$ can themselves be viewed as a chain complex, with the differential induced by the higher order, i.e., non-filtration preserving, differentials on $\widehat{CFK}(K)$. Moreover, up to filtered chain homotopy equivalence, $\widehat{HFK}(K)$ is a basis over $\F[U, U^{-1}]$ for $\CFKi(K)$. Choosing $\widehat{HFK}(K)$ as a basis for $\CFKi(K)$ has the advantage that it is \emph{reduced}; that is, the differential strictly lowers the filtration. Graphically, this means that each arrow will point strictly downward or to the left (or both).

We have the following chain homotopy equivalences \cite[Theorem 7.1 and Section 3.5]{OSknots}:
\begin{align*}
\CFKi(K_1 \# K_2) &\simeq \CFKi(K_1) \otimes_{\F[U, U^{-1}]} \CFKi(K_2) \\
\CFKi(-K) &\simeq \CFKi(K)^* 
\end{align*}
where $\CFKi(K)^*$ denotes the dual of $\CFKi(K)$, i.e., $\textup{Hom}_{\F[U, U^{-1}]}(\CFKi(K), \F[U, U^{-1}])$.

To fully exploit the richness of the invariant $\CFKi(K)$, it is helpful to study certain induced maps on homology. For example, the Ozsv\'ath-Szab\'o concordance invariant $\tau$ is defined in \cite{OS4ball} to be
\[ \tau(K) = \textup{min} \{ s \ | \ \iota : C\{i=0, j \leq s \} \rightarrow C\{ i=0 \} \textup{ induces a non-trivial map on homology} \}, \] 
where $\iota$ is the natural inclusion of chain complexes. Note that $H_*(C\{ i=0\})\cong \widehat{HF}(S^3) \cong \F$. The invariant $\tau(K)$ provides a lower bound on the $4$-ball genus of $K$, and gives a surjective homomorphism from the smooth concordance group to the integers \cite{OS4ball}.

More recently, the $\{-1, 0, 1\}$-valued concordance invariant $\varepsilon(K)$ has been defined in \cite{Homsmooth}. To define $\varepsilon$, one first considers the map on homology, $F_*$, induced by the chain map
\[ F: C\{ i=0 \} \rightarrow C\{ \textup{min}(i, j-\tau)=0 \} \]
where $\tau=\tau(K)$, and the chain map consists of quotienting by $C\{ i=0, j< \tau \}$ followed by the inclusion of $C \{ i=0, j \geq \tau \}$ into $C\{ \textup{min}(i, j-\tau)=0 \}$. Similarly, we consider the map $G_*$, induced by
\[ G: C\{ \textup{max}(i, j-\tau)=0 \} \rightarrow C\{ i=0 \}, \]
the composition of quotienting by $C \{ i < 0, j=\tau \}$ and including $C \{ i=0, j \leq \tau \}$ into $C\{ i=0 \}$.

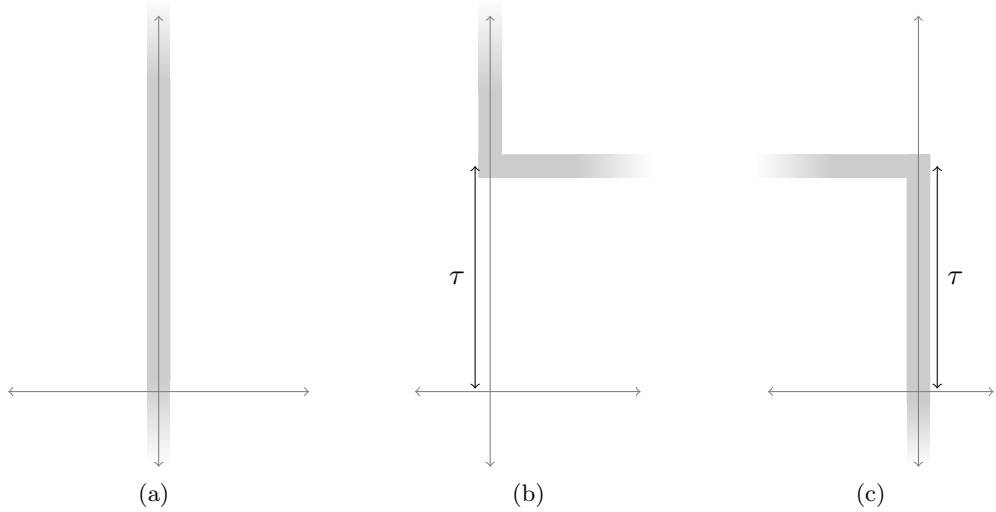
\begin{figure}[htb!]
\vspace{5pt}
\centering
\subfigure[]{
\begin{tikzpicture}
	\filldraw[black!20!white] (-0.15, 0.15) rectangle (0.15, 4.15);
	\shade[top color=white, bottom color=black!20!white] (-0.15, 4.15) rectangle (0.15, 5.25);
	\shade[top color=white, top color=black!20!white] (-0.15, -1) rectangle (0.15, 0.15);
	\begin{scope}[thin, gray]
		\draw [<->] (-2, 0) -- (2, 0);
		\draw [<->] (0, -1) -- (0, 5);
	\end{scope}
\end{tikzpicture}
}
\hspace{25pt}
\subfigure[]{
\begin{tikzpicture}
	\filldraw[black!20!white] (-0.15, 2.85) rectangle (0.15, 4);
	\filldraw[black!20!white] (-0.15, 2.85) rectangle (1.15, 3.15);
	\shade[top color=white, bottom color=black!20!white] (-0.15, 4) rectangle (0.15, 5.25);
	\shade[top color=white, left color=black!20!white] (1.15, 2.85) rectangle (2.15, 3.15);
	\begin{scope}[thin, gray]
		\draw [<->] (-1, 0) -- (2, 0);
		\draw [<->] (0, -1) -- (0, 5);
	\end{scope}
	\draw [<->] (-0.2, 0.04) -- (-0.2, 3) node [midway, left] {$\tau$};
\end{tikzpicture}
}
\hspace{25pt}
\subfigure[]{
\begin{tikzpicture}
	\filldraw[black!20!white] (-0.15, -0.15) rectangle (0.15, 3.15);
	\filldraw[black!20!white] (-1.15, 2.85) rectangle (0.15, 3.15);
	\shade[top color=white, top color=black!20!white] (-0.15, -1) rectangle (0.15, -0.15);
	\shade[top color=white, right color=black!20!white] (-2.15, 2.85) rectangle (-1.15, 3.15);
	\begin{scope}[thin, gray]
		\draw [<->] (-2, 0) -- (1, 0);
		\draw [<->] (0, -1) -- (0, 5);
	\end{scope}
	\draw [<->] (0.25, 0.04) -- (0.25, 3) node [midway, right] {$\tau$};
\end{tikzpicture}
}

\caption{Left, the subquotient complex $C \{ i=0 \}$. Center, the subquotient complex $C\{ \textup{min} (i, j-\tau)=0\}$. Right, the subquotient complex $C\{ \textup{max} (i, j-\tau)=0\}$.}
\end{figure}

\begin{definition}
The invariant $\varepsilon$ is defined in terms of $F_*$ and $G_*$ as follows:
\begin{itemize}
	\item $\varepsilon(K)=1$ if $F_*$ is trivial (in which case $G_*$ is necessarily non-trivial).
	\item $\varepsilon(K)=-1$ if $G_*$ is trivial (in which case $F_*$ is necessarily non-trivial).
	\item $\varepsilon(K)=0$ if $F_*$ and $G_*$ are both non-trivial.
\end{itemize}
\end{definition}
\noindent See \cite[Section 3]{Homsmooth} for details.

Two knots $K_1$ and $K_2$ are \emph{$\varepsilon$-equivalent} if
\[ \varepsilon(K_1 \# -K_2)=0.\]
Concordant knots are $\varepsilon$-equivalent \cite[Theorem 2]{Homsmooth}.

\begin{proof}[Proof of Theorem \ref{thm:gamma}]
The proof that $\gamma(K)$ gives a lower bound on concordance genus is an immediate consequence of the definition of $\gamma(K)$, as follows.
By Theorem \ref{thm:epsilonequivalent}, any two concordant knots are $\varepsilon$-equivalent. Since  $g(K)=b(\CFKi(K))$ by  \cite[Theorem 1.2]{OSgenusbounds} and
\[ \gamma(K)= \textup{min} \{ b(C) \ | \ C \textup{  is $\varepsilon$-equivalent to } \CFKi(K) \}, \]
it follows immediately that
\[ g_c(K) \geq \gamma(K).\]
\end{proof}

Further invariants are defined in \cite[Section 6]{Homsmooth}. Suppose $\varepsilon(K)=1$, and consider the map on homology $H_s$ induced by the chain map
\[ C\{ i=0 \} \rightarrow C\{ \textup{min} (i, j-\tau)=0, i \leq s\}, \]
where $s$ is a non-negative integer, and the map consists of quotienting by $C\{ i=0, j < \tau \}$, followed by inclusion. When $s$ is sufficiently large, the map $H_s$ is trivial since $\varepsilon(K)=1$, while when $s=0$, it is not difficult to see that the map $H_s$ is non-trivial. Thus, one can define
\[a_1= \textup{min} \{ s \ | \ H_s \textup{ is trivial} \}. \]
Going even further, consider the map  on homology $H_{a_1, s}$ induced by
\[ C\{ i=0 \} \rightarrow C \big\{ \{ \textup{min} (i, j-\tau)=0, i \leq a_1\} \cup \{ i=a_1, \tau-s \leq j < \tau\} \big\}, \]
where the map consists of quotienting by $C\{ i=0, j < \tau \}$, followed by inclusion. Define
\[a_2= \textup{min} \{ s \ | \ H_{a_1, s} \textup{ is non-trivial} \}. \]
The set  $\{ s \ | \ H_{a_1, s} \textup{ is non-trivial} \}$ may be empty -- there is no reason why the map $H_{a_1, s}$ must be non-trivial for any $s$ -- in which case the invariant $a_2(K)$ is undefined.

\begin{lemma}[{\cite[Lemma 6.2]{Homsmooth}}]
\label{lem:basis}
Let $a_1=a_1(K)$ and let $a_2=a_2(K)$ be well-defined. Then there exists a basis $\{x_i\}$  over $\F[U, U^{-1}]$ for $\CFKi$ with basis elements $x_0$, $x_1$, and $x_2$ with the property that
\begin{itemize}
	\item \label{it:a_1} There is a horizontal arrow of length $a_1$ from $x_1$ to $x_0$.	
	\item There are no other horizontal or vertical arrows to or from $x_0$.
	\item There are no other horizontal arrows to or from $x_1$.
	\newcounter{enumi_saved}
	\setcounter{enumi_saved}{\value{enumi}}
	\item \label{it:a_2} There is a vertical arrow of length $a_2$ from $x_1$ to $x_2$.
	\item There are no other vertical arrows to or from $x_1$ or $x_2$.
\end{itemize}
\end{lemma}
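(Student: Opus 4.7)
The plan is to produce the basis in two successive stages: first horizontal Gaussian elimination to extract $x_0$ and $x_1$, and then vertical Gaussian elimination, restricted so as not to disturb the horizontal structure, to extract $x_2$. Starting from any reduced basis of $\CFKi(K)$ coming from a choice of basis for $\widehat{HFK}(K)$, I would perform horizontal Gaussian elimination to obtain a horizontally simplified basis: every generator is either a cycle for the horizontal differential or is the source/target of a unique horizontal arrow. Among such bases, I arrange that the distinguished generator at $(0,\tau)$ representing the nonzero class in $H_*(C\{i=0\}) \cong \F$ is a single basis element, which I call $x_0$.

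Since $\varepsilon(K)=1$, the element $x_0$ must be a horizontal boundary, and its unique horizontal partner $x_1$ then satisfies: the horizontal part of $\partial x_1$ is $U^L x_0$ for some $L \ge 1$, with no other horizontal arrows at $x_0$ or $x_1$. That $x_0$ has no vertical arrows at it follows automatically from the fact that $x_0$ represents the nonzero homology class of $C\{i=0\}$. A short computation identifies $L=a_1$: the element $U^{-L}x_1$ lies in the L-shaped complex $C\{\min(i,j-\tau)=0,\ i \le s\}$ precisely when $s \ge L$, and in that range its boundary equals $x_0$ modulo pieces outside the L-shape, while for $s<L$ no element of the L-shape bounds $x_0$.

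Next, I would perform a vertical Gaussian elimination restricted to moves that do not introduce new horizontal arrows at $x_0$ or $x_1$. The admissible moves replace a generator $y$ by $y+U^k z$ with $z$ at the same $i$-coordinate as $y$, and these are harmless at $x_0$ and $x_1$ precisely because the horizontal arrow structure has already been isolated there. This yields a basis that is still horizontally simplified at $x_0, x_1$ but now also vertically simplified at $x_1$, exposing a unique vertical partner $x_2$ with the vertical part of $\partial x_1$ equal to $U^M x_2$ and no other vertical arrows at $x_1$ or $x_2$. A parallel L-shape argument, using now the subquotient complex appearing in the definition of $H_{a_1,s}$, identifies $M=a_2$.

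The main obstacle is this compatibility step---ensuring that the vertical simplification preserves the horizontal structure at $x_0$ and $x_1$. The key observation is that horizontal moves mix generators at the same $j$-coordinate while vertical moves mix generators at the same $i$-coordinate, so they act on different strata; but one must still verify by a finite case check that the specific vertical moves needed to isolate $x_2$ can be chosen to leave $x_0$, $x_1$, and the horizontal arrow between them unchanged. Once this compatibility is secured, the remaining properties in the statement---that $x_0$, $x_1$, $x_2$ have no other horizontal or vertical arrows beyond the stated ones---follow from the two simplifications.
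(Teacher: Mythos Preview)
The paper does not prove this lemma; it is quoted verbatim from \cite[Lemma 6.2]{Homsmooth} and used as a black box. So there is no in-paper proof to compare against, only the cited one.

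Your sketch is essentially the strategy used in \cite{Homsmooth}: horizontally simplify to isolate $x_0$ and $x_1$, then vertically simplify near $x_1$ to isolate $x_2$, checking that the second round of moves does not disturb the first. One point to tighten: you write that ``$x_0$ has no vertical arrows at it follows automatically from the fact that $x_0$ represents the nonzero homology class of $C\{i=0\}$.'' Being a cycle rules out vertical arrows \emph{out} of $x_0$, and not being a boundary rules out a single basis element mapping \emph{onto} $x_0$, but in a basis that is only horizontally simplified there could still be some $y$ with $\partial_{\vert} y = x_0 + z$. Eliminating such arrows requires a vertical change of basis (replace $x_0$ by $x_0+z$ or adjust $y$), and you must check that this does not reintroduce horizontal arrows at $x_0$. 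This is exactly the compatibility issue you flag later for $x_1$ and $x_2$; it already arises at $x_0$. In \cite{Homsmooth} this is handled by the observation that a vertical change of basis $y \mapsto y + y'$ with $A(y')<A(y)$ leaves the horizontal differential of $y$ unchanged modulo lower horizontal filtration, so the horizontal simplification at $x_0,x_1$ survives. With that refinement your outline matches the cited proof.
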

\noindent See Figure \ref{fig:basis}.

\begin{figure}[htb!]
\vspace{5pt}
\centering
\subfigure[]{
\begin{tikzpicture}
	\filldraw[black!20!white] (-0.15, 2.85) rectangle (0.15, 4);
	\filldraw[black!20!white] (-0.15, 2.85) rectangle (1.15, 3.15);
	\shade[top color=white, bottom color=black!20!white] (-0.15, 4) rectangle (0.15, 5.25);
	\begin{scope}[thin, gray]
		\draw [<->] (-1, 0) -- (3, 0);
		\draw [<->] (0, -1) -- (0, 5);
	\end{scope}
	\draw [<->] (-0.2, 0.04) -- (-0.2, 3) node [midway, left] {$\tau$};
	\draw [<->] (0.05, 3.2) -- (1, 3.2) node [midway, above] {$s$};
\end{tikzpicture}
}
\hspace{25pt}
\subfigure[]{
\begin{tikzpicture}
	\filldraw[black!20!white] (-0.15, 2.85) rectangle (0.15, 4);
	\filldraw[black!20!white] (-0.15, 2.85) rectangle (1.15, 3.15);
	\shade[top color=white, bottom color=black!20!white] (-0.15, 4) rectangle (0.15, 5.25);
	\filldraw[black!20!white] (0.85, 0.85) rectangle (1.15, 3.15);
	\begin{scope}[thin, gray]
		\draw [<->] (-1, 0) -- (3, 0);
		\draw [<->] (0, -1) -- (0, 5);
	\end{scope}
	\draw [<->] (-0.2, 0.04) -- (-0.2, 3) node [midway, left] {$\tau$};
	\draw [<->] (0.05, 3.2) -- (1, 3.2) node [midway, above] {$a_1$};
	\draw [<->] (1.2, 1) -- (1.2, 3) node [midway, right] {$s$};	
\end{tikzpicture}
}
\hspace{25pt}
\subfigure[]{
\begin{tikzpicture}
	\filldraw[black!20!white] (-0.15, 2.85) rectangle (0.15, 4);
	\filldraw[black!20!white] (-0.15, 2.85) rectangle (1.15, 3.15);
	\shade[top color=white, bottom color=black!20!white] (-0.15, 4) rectangle (0.15, 5.25);
	\filldraw[black!20!white] (0.85, 0.85) rectangle (1.15, 3.15);
	\begin{scope}[thin, gray]
		\draw [<->] (-1, 0) -- (3, 0);
		\draw [<->] (0, -1) -- (0, 5);
	\end{scope}
	\filldraw (0, 3) circle (2pt) node[] (x_0){};
	\filldraw (1, 3) circle (2pt) node[] (x_1){};
	\filldraw (1, 1) circle (2pt) node[] (x_2){};
	\draw [very thick, <-] (x_0) -- (x_1);
	\draw [very thick, <-] (x_2) -- (x_1);
	\node [left] at (x_0) {$x_0$};
	\node [right] at (x_1) {$x_1$};
	\node [right] at (x_2) {$x_2$};
\end{tikzpicture}
}
\caption{Left, the complex $C\{ \textup{min} (i, j-\tau)=0, i \leq s\}$. Center, the complex $C \big\{ \{ \textup{min} (i, j-\tau)=0, i \leq a_1\} \cup \{ i=a_1, \tau-s \leq j < \tau\} \big\}$. Right, part of the basis in Lemma \ref{lem:basis}.}
\label{fig:basis}
\end{figure}
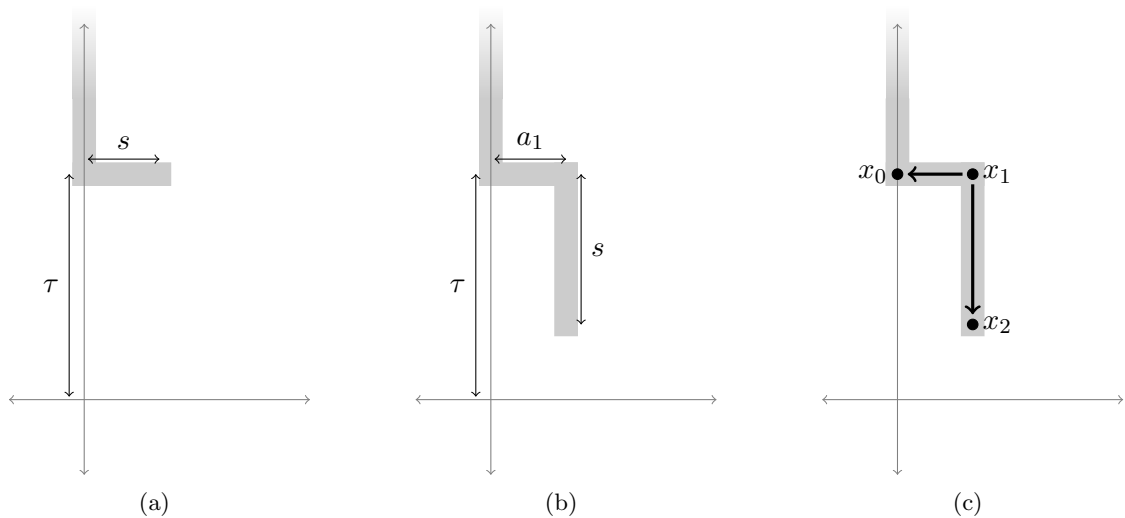

The numbers $a_1$ and $a_2$ are invariants of $\varepsilon$-equivalence \cite[Lemma 6.1]{Homsmooth}. We recall the proof here. If $K_1$ and $K_2$ are $\varepsilon$-equivalent, then $\varepsilon(K_1 \# -K_2)=0$ and by \cite[Lemma 3.3]{Homcables}, there exists a basis for $\CFKi(K_1 \# -K_2)$ with a distinguished basis element $x_0$ with no incoming or outgoing vertical or horizontal arrows. Similarly, there exists a basis for $\CFKi(K_2 \# -K_2)$ with a distinguished basis element $y_0$. The knot $K_1 \# -K_2 \# K_2$ is $\varepsilon$-equivalent to $K_1$ and $K_2$, and we may compute $a_1(K_1 \# -K_2 \# K_2)$ and $a_2(K_1 \# -K_2 \# K_2)$ by considering either 
\[ \{x_0\} \otimes \CFKi(K_2) \qquad \textup{or} \qquad \CFKi(K_1) \otimes \{y_0\}. \]
The former gives us $a_1(K_2)$ and $a_2(K_2)$, and the latter gives us $a_1(K_1)$ and $a_2(K_1)$, completing the proof.

At times, it may be difficult to compute $\gamma(K)$ directly, but we can bound it using the invariants $\tau(K)$, $a_1(K)$, and $a_2(K)$.

\begin{lemma}
\label{lem:gamma}
Suppose that $\varepsilon(K)=1$, and $a_2(K)$ is defined. Then
\[ \gamma(K) \geq |\tau(K)-a_1(K)-a_2(K)|. \]
\end{lemma}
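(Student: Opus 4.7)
The plan is to fix any $\Z\oplus\Z$-filtered chain complex $C$ with $\varepsilon(\CFKi(K)\otimes C^*)=0$ and show that $b(C)\geq|\tau-a_1-a_2|$; the lower bound on $\gamma(K)$ then follows immediately by the definition of $\gamma$.

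First, I would verify that all three numerical invariants $\tau, a_1, a_2$ are preserved under $\varepsilon$-equivalence. The invariance of $a_1$ and $a_2$ is \cite[Lemma 6.1]{Homsmooth}, whose proof was recalled just before the lemma. For $\tau$, one combines the additivity $\tau(C_1\otimes C_2)=\tau(C_1)+\tau(C_2)$ with the standard fact that $\varepsilon(J)=0$ forces $\tau(J)=0$; applied to $J=\CFKi(K)\otimes C^*$ this yields $\tau(C)=\tau(K)$, and in particular $\varepsilon(C)=\varepsilon(K)=1$, so Lemma \ref{lem:basis} applies to $C$ with the same numerical data.

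Next, applying Lemma \ref{lem:basis} to $C$ itself produces basis elements $x_0, x_1, x_2 \in C$ over $\F[U, U^{-1}]$. Reading off Figure \ref{fig:basis}(c) in the convention that a picture point at $(i,j)$ represents the $U^{-i}$-shift of a basis element with Alexander grading $j-i$, the Alexander filtrations are $A(x_0)=\tau$, $A(x_1)=\tau-a_1$, and $A(x_2)=\tau-a_1-a_2$. In a reduced model of $C$, each is a non-trivial class in the subquotient $H_*(C(0,A(x_i)))$, so both $H_*(C(0,\tau))$ and $H_*(C(0,\tau-a_1-a_2))$ are non-zero.

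Finally, I would combine these two non-vanishings. When $\tau\geq a_1+a_2$, the class at $j=\tau$ alone already gives $b(C)\geq\tau\geq\tau-a_1-a_2=|\tau-a_1-a_2|$. When $\tau<a_1+a_2$, the grading $\tau-a_1-a_2$ is negative and one instead needs $b(C)\geq a_1+a_2-\tau$; for this I would apply the $\varepsilon=-1$ analog of Lemma \ref{lem:basis} to $C^*$, which is $\varepsilon$-equivalent to $\CFKi(K)^*\simeq\CFKi(-K)$, to produce a basis element of $C^*$ at Alexander grading $a_1+a_2-\tau$, and then combine with the duality $H_*(C^*(0,j))\neq 0 \Leftrightarrow H_*(C(0,-j))\neq 0$ together with the symmetry of $b(\cdot)$ under taking duals to conclude. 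The main obstacle is precisely this second case: confirming the correct conventions for breadth under duality and checking that the basis element produced from $C^*$ yields the required non-vanishing at the correct Alexander level in $C$.
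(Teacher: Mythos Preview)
Your approach coincides with the paper's up to the point of producing a nontrivial class in $H_*(C(0,\tau-a_1-a_2))$ via Lemma~\ref{lem:basis} and the $\varepsilon$-invariance of $\tau$, $a_1$, $a_2$. For the passage to $|\tau-a_1-a_2|$, the paper simply invokes the conjugation symmetry of knot Floer homology \cite[Section~3.5]{OSknots}, which gives $H_*(C(0,j))\neq 0 \Leftrightarrow H_*(C(0,-j))\neq 0$ and hence $H_*(C(0,|\tau-a_1-a_2|))\neq 0$ in one stroke. Your case split is a detour: the first case ($\tau\geq a_1+a_2$) is correct and indeed avoids symmetry, but in the second case your assertion that $b(\cdot)$ is invariant under taking duals is exactly a reformulation of this same conjugation symmetry---for a general complex $b(C^*)=-\min\{j:H_*(C(0,j))\neq 0\}$, which equals $b(C)$ only under the symmetry---so you are not circumventing it but rederiving it by a longer route. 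Note that both the paper's argument and yours tacitly assume that $C$ carries the conjugation symmetry; this holds automatically when $C$ is a knot Floer complex (which is all that is needed for the application to $g_c$), though not obviously for an arbitrary $\Z\oplus\Z$-filtered complex, and the paper is equally brief on this point.
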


\begin{proof}
From the basis found in Lemma \ref{lem:basis} and the fact that $\tau$, $a_1$, and $a_2$ are invariants of $\varepsilon$-equivalence, it follows that 
\[ H_*\Big(C\big(0, \tau(K)-a_1(K)-a_2(K) \big) \Big) \neq 0, \]
for any complex $C$ that is  $\varepsilon$-equivalent to $\CFKi(K)$.
Using the various symmetry properties of $\CFKi(K)$ \cite[Section 3.5]{OSknots}, it follows that
\[ H_*\big(C(0, |\tau(K)-a_1(K)-a_2(K)|\big) \neq 0, \]
as well. This implies that $b(C) \geq |\tau(K)-a_1(K)-a_2(K)|$ for any $C$ that is $\varepsilon$-equivalent to $\CFKi(K)$, giving the desired bound.
\end{proof}

\section{The knots $D_{p, 1} \# -D_{p-1, 1}$}
\label{sec:theknots}

Let $D$ denote the (positive, untwisted) Whitehead double of the right-handed trefoil. Let $K_{p,q}$ denote the $(p, q)$-cable of $K$, where $p$ indicates the longitudinal winding and $q$ the meridional winding. We will study various properties of the family of knots 
\[ D_{p, 1} \# -D_{p-1, 1},  \quad p>1.\]
 Since the Alexander polynomial of $D$ is equal to one, by Freedman \cite{Freedman} $D$ is topologically slice. Hence the $(p, 1)$-cable of $D$ is topologically concordant to the underlying pattern torus knot, which is unknotted. It follows that the knot $D_{p, 1} \# -D_{p-1, 1}$ is topologically slice.

In the following lemma, we will show that these knots are never smoothly slice.

\begin{lemma}
\label{lem:4genus1}
The smooth $4$-ball genus of the knot $D_{p, 1} \# -D_{p-1, 1}$ is equal to one.
\end{lemma}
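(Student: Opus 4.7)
The plan is to establish matching lower and upper bounds of $1$ for $g_4(D_{p,1} \# -D_{p-1,1})$.

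For the lower bound I would appeal to the Ozsv\'ath--Szab\'o concordance invariant $\tau$. Since $D$ is the positive untwisted Whitehead double of the right-handed trefoil, $\tau(D) = 1$ and $\varepsilon(D) = 1$ are standard facts. Hom's cabling formula, valid whenever $\varepsilon(K) = 1$, gives $\tau(K_{p,q}) = p\tau(K) + (p-1)(q-1)/2$; specializing to $K = D$ and $q = 1$ yields $\tau(D_{p,1}) = p$. Since $\tau$ is additive under connected sum with $\tau(-K) = -\tau(K)$, one obtains $\tau(D_{p,1} \# -D_{p-1,1}) = p - (p-1) = 1$, and then the bound $g_4 \geq |\tau|$ gives the desired inequality.

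For the upper bound I would build an explicit genus-$1$ surface in $B^4$ bounded by $D_{p,1} \# -D_{p-1,1}$. The core ingredient is a single saddle move in the pattern solid torus $V$ that splits the $(p,1)$-cable pattern into the disjoint union of the $(p-1,1)$-cable pattern and the core longitude. Realizing the $(p,1)$-cable pattern as the closure in $V$ of the braid $\sigma_1 \sigma_2 \cdots \sigma_{p-1}$ on $p$ strands, the Seifert resolution of the crossing $\sigma_{p-1}$ provides such a saddle: the remaining braid $\sigma_1 \cdots \sigma_{p-2}$ on strands $1, \dots, p-1$ closes to the $(p-1,1)$-cable pattern, while strand $p$ closes to the core. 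Satelliting by $D$ transports this to a band move in $S^3$ that sends $D_{p,1}$ to the split link $D_{p-1,1} \sqcup D$.

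I would then carry out this saddle inside the $D_{p,1}$ summand of the knot $D_{p,1} \# -D_{p-1,1}$, arranging the connected-sum band so that it lies on one of the $p-1$ braid strands destined to become the $(p-1,1)$-cable component. The resulting pair-of-pants cobordism in $S^3 \times [0,1]$ takes $D_{p,1} \# -D_{p-1,1}$ to the two-component link $(D_{p-1,1} \# -D_{p-1,1}) \sqcup D$. Since $D_{p-1,1} \# -D_{p-1,1}$ is of the form $K \# -K$ it is smoothly ribbon, hence bounds a smooth disk in $B^4$; meanwhile $D$ bounds a pushed-in genus-$1$ Seifert surface in $B^4$ since $g(D) = 1$. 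Gluing these caps onto the two ends of the pair-of-pants produces a surface $\Sigma \subset B^4$ with $\partial \Sigma = D_{p,1} \# -D_{p-1,1}$ and $\chi(\Sigma) = -1 + 1 - 1 = -1$, so $g(\Sigma) = 1$ and $g_4 \leq 1$.

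The one bookkeeping step that requires a little care is arranging the saddle band and the connected-sum band simultaneously, so that after the saddle the connected sum really lands on the $(p-1,1)$-cable component rather than on the $D$ component; this reduces to a choice of where to attach the connected-sum band in the braid picture. All remaining inputs (Hom's cabling formula for $\tau$, sliceness of $K \# -K$, and $g_4(D) \leq g(D) = 1$) are standard.
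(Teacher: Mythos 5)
Your lower bound is exactly the paper's: $\tau(D)=1$, the cabling formula gives $\tau(D_{p,1})=p$, additivity gives $\tau(D_{p,1}\#-D_{p-1,1})=1$, and $g_4\geq|\tau|$. Your upper bound takes a genuinely different route. The paper pushes a genus $2p-1$ Seifert surface $F$ for $D_{p,1}\#-D_{p-1,1}$ into $B^4$ and performs ambient surgery along the slice knot $D_{p-1,1}\#-D_{p-1,1}$, which sits on $F$ bounding a genus $2p-2$ subsurface (hence with zero surface framing), leaving a genus-one slice surface. Your route --- a saddle in the pattern solid torus splitting the $(p,1)$-pattern into the $(p-1,1)$-pattern plus a core-parallel circle, followed by capping the resulting two components --- is geometrically natural, but as written it has a gap.

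The gap is your assertion that the saddle produces a \emph{split} link $D_{p-1,1}\sqcup D$. It does not: both the $(p-1,1)$-pattern and the core-parallel strand have nonzero winding number in the companion solid torus $N(D)$, and since $D$ is knotted, an innermost-disk argument on the incompressible torus $\partial N(D)$ rules out any splitting sphere (the vanishing of the linking number notwithstanding). For the union of the pair of pants with the two caps to be an \emph{embedded} surface in $B^4$, the ribbon disk for $D_{p-1,1}\#-D_{p-1,1}$ and the pushed-in genus-one surface for the $D$-component must be disjoint, and for a non-split link this is not automatic --- disjointness can always be forced by tubing, but only at the cost of extra genus, which is precisely what you cannot afford. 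This, rather than the placement of the connected-sum band, is the step requiring care. It is repairable: the core-parallel component is isotopic, in the complement of the other component, to the Seifert-framed pushoff of $D$ on $\partial N(D)$, which bounds a pushed-off copy of the genus-one Seifert surface of $D$ lying entirely outside $N(D)$ and hence disjoint from the other component; place that surface in a collar $S^3\times[1-\epsilon,1]$ and run the other component straight down through the collar before capping it with its ribbon disk in the smaller ball. With that addition your argument goes through.
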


\begin{proof}
A genus $p$ Seifert surface for $D_{p, 1}$ can be built from $p$ parallel copies of a genus one Seifert surface for $D$, and $p-1$ half-twisted bands connecting them. Likewise, we may build a genus $p-1$ Seifert surface for $-D_{p-1, 1}$. Connecting these two Seifert surfaces together with a band yields a genus $2p-1$ Seifert surface $F$ for $D_{p, 1} \# -D_{p-1, 1}$. The slice knot $D_{p-1, 1} \# -D_{p-1, 1}$ sits on $F$, and furthermore bounds a subsurface of genus $2p-2$. We may perform surgery on $F$ in $B^4$ along $D_{p-1, 1} \# -D_{p-1, 1}$, yielding a genus one slice surface for $D_{p, 1} \# -D_{p-1, 1}$.

By \cite{HeddenWhitehead}, $\tau(D)=1$, and by \cite[Theorem 1.2]{HeddencablingII} (cf. \cite[Theorem 1]{Homcables}), it follows that $\tau(D_{p, 1})=p$. Therefore, $\tau(D_{p, 1} \# -D_{p-1, 1})=1$, which is a lower bound on the $4$-ball genus of the knot \cite{OS4ball}. Since this bound can be realized, it follows that $g_4(D_{p, 1} \# -D_{p-1, 1})=1$.
\end{proof}

To bound the concordance genus of $K_p=D_{p, 1} \# -D_{p-1, 1}$, we consider its knot Floer complex. We do this using the tools of \cite{Homsmooth} together with the bordered Floer homology package of Lipshitz, Ozsv\'ath, and Thurston \cite{LOT}, as applied to cables by Petkova \cite{Petkova}.

The knot $D$ is $\varepsilon$-equivalent to the $(2,3)$-torus knot $T_{2,3}$ \cite[Lemma 6.12]{Homsmooth}. Moreover, if two knots are $\varepsilon$-equivalent, then so are their satellites \cite[Proposition 4]{Homsmooth}. Therefore, to understand $D$ and its satellites from the perspective of $\varepsilon$-equivalence, we may instead work with $T_{2,3}$ and its satellites.
The advantage of this is the knot Floer complex of $T_{2,3}$ is simpler to work with from a computational perspective. It has rank three, and is homologically thin, meaning that $\widehat{HFK}(T_{2,3})$ is supported on a single diagonal with respect to its bigrading.

Cables of homologically thin knots are studied by Petkova in \cite{Petkova}, where she describes $\widehat{HFK}(K_{p, pn+1})$ for any homologically thin knot $K$ in terms of the Alexander polynomial of $K$, $\tau(K)$, $p$, and $n$. The proof of her main result relies on bordered Floer homology, and the same techniques can be used to determine the $\Z$-filtered chain complex $\widehat{CFK}(K_{p, pn+1})$.

Since $T_{2,3}$ is homologically thin, we may use Theorem 1 of \cite{Petkova} to compute the $\Z$-filtered chain complex $\widehat{CFK}(T_{2,3; p, 1})$, from which we can determine certain information about $\CFKi(T_{2,3;p,1})$, which is $\varepsilon$-equivalent to $\CFKi(D_{p,1})$. More precisely, this information will be the invariants $a_1$ and $a_2$, which will determine the bounds on concordance genus necessary for Theorem \ref{thm:theknots}.

Towards this end, a useful tool is the well-known ``edge reduction'' procedure for filtered chain complexes over $\F$; see, for example, \cite[Section 2.6]{Levine}. That is, we may depict a filtered chain complex as a directed graph, where there is an arrow from $x_i$ to $x_j$ if $x_j$ appears with non-zero coefficient in $\partial x_i$. We label the arrow from $x_i$ to $x_j$ with the Alexander filtration difference between $x_i$ and $x_j$. If there is an arrow from $x_i$ to $x_j$ that preserves filtration, we may cancel it by deleting $x$ and $y$ from the graph, and for each $k$ and $\ell$ with edges
\begin{align*}
	x_k &\xrightarrow{a} x_j  \\
	x_i &\xrightarrow{b} x_\ell,
\end{align*}
we either add an arrow from $x_k$ to $x_\ell$ if one was not there previously, or delete the arrow from $x_k$ to $x_\ell$ if there was one. See Figure \ref{fig:edgereduction}. If we add an arrow from $x_k$ to $x_\ell$, then its filtration shift is $a+b$ where $a$ and $b$ where the filtration shifts of the arrows from $x_k$ to $x_j$ and from $x_i$ to $x_\ell$, respectively. This procedure corresponds to the following chain homotopy equivalence, consisting of a change of basis which yields an acyclic subcomplex: 
\begin{itemize}
	\item For each $x_k$ with an arrow to $x_j$, we replace $x_k$ with $x_k+x_i$.
	\item The basis element $x_j$ is replaced with $\partial x_i$.
	\item The subcomplex spanned by $x_i$ and $\partial x_i$ is acyclic.
\end{itemize}
We make use of this procedure in the following proposition.

\begin{figure}[htb!]
\begin{tikzpicture}
	\filldraw (0, 2) circle (1.5pt) node[] (au_2) {};
		\node [above] at (au_2) {$x_i$};
	\filldraw (3, 2) circle (1.5pt) node[] (b_1mu_1) {};
		\node [above] at (b_1mu_1) {$x_k$};
	\filldraw (3, 0) circle (1.5pt) node[] (b_2p-2mu_1) {};
		\node [right] at (b_2p-2mu_1) {$x_\ell$};
	\filldraw (0, 0) circle (1.5pt) node[] (au_3) {};
		\node [below] at (au_3) {$x_\ell$};
	\filldraw (-2, 0) circle (1.5pt) node[] (b_2p-2v_1) {};
		\node [left] at (b_2p-2v_1) {$x_j$};
	\filldraw (-2, 2) circle (1.5pt) node[] (b_1v_1) {};
		\node [above] at (b_1v_1) {$x_k$};
	\draw [very thick, ->] (au_2) -- (au_3) node[midway, right] {$b$};
	\draw [very thick, ->] (b_1mu_1) -- (b_2p-2mu_1) node[midway, right] {$a+b$};
	\draw [very thick, ->] (au_2) -- (b_2p-2v_1) node[midway, above] {$0$};
	\draw [very thick, ->] (b_1v_1) -- (b_2p-2v_1) node[midway, left] {$a$};
\end{tikzpicture}
\centering
\caption{An example of edge reduction. Left, before reduction; right, after.}
\label{fig:edgereduction}
\end{figure}
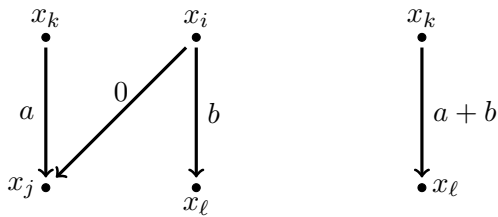

\begin{proposition}
\label{prop:cable}
The group $\widehat{HFK}(T_{2,3;p,1})$ has rank $6p-5$. The generators are listed in Table \ref{tab:cable}, and the non-zero higher differentials are
\begin{align*}
	\partial b_1 v_1 &= b_1\mu_1[p] \\
	\partial b_j v_1 &= b_{2p-j-1}v_1[p-j]  \qquad 2 \leq j \leq p-1 \\
	\partial b_jv_2 &= b_{j+1}\mu_1[1] \ \quad \qquad 1 \leq j \leq p-2 \\
	\partial b_{p-1}v_2 &= b_pv_2[1] \\
	\partial b_j \mu_2 &= b_{2p-j-1}\mu_2[p-j] \quad \quad 1 \leq j \leq p-1,
\end{align*}
where the brackets denote the drop in Alexander filtration, e.g., the Alexander filtration of $b_1 \mu_1$ is $p$ less than that of $b_1v_1$.
\end{proposition}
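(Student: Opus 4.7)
The plan is to follow the strategy signposted in the paragraph preceding the proposition: use Petkova's bordered Floer computation \cite{Petkova} for cables of a homologically thin knot, specialized to $K = T_{2,3}$, and extract the $\Z$-filtered chain complex rather than merely the bigraded homology. Petkova's theorem already identifies $\widehat{HFK}(T_{2,3;p,1})$ as an $\F$-vector space of rank $6p-5$, with Alexander-filtration levels computed from the cabled Alexander polynomial. So the list of generators and their Alexander gradings in Table \ref{tab:cable} should follow from her rank formula together with a direct symbolic computation of $\Delta_{T_{2,3;p,1}}(t)$ via the standard cabling formula.

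For the higher differentials, the plan is to compute the box tensor product explicitly. First, write down $\widehat{CFD}$ of the complement of $T_{2,3}$ with respect to the standard parametrization by meridian and Seifert-framed longitude; this is a well-known small bordered model over the torus algebra, whose generators correspond to the ``$v_k$''/``$\mu_k$'' decorations that reappear in the proposition. Second, record the type A structure of the $(p,1)$-cabling pattern on the solid torus as presented in \cite{Petkova}; its generators are the $b_j$'s of the statement, and its structure maps encode the $(p,1)$-torus pattern. Third, form $\widehat{CFA}(\text{pattern}) \boxtimes \widehat{CFD}(T_{2,3})$; the generators of the resulting complex are precisely the $b_j v_k$'s and $b_j \mu_k$'s of Table \ref{tab:cable}, each equipped with an Alexander filtration level read off from the tensor product.

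The next step is to simplify using the edge reduction procedure depicted in Figure \ref{fig:edgereduction}, systematically cancelling every filtration-preserving arrow. Petkova's rank formula guarantees that the reduced complex has exactly $6p-5$ generators, so no further filtration-preserving cancellations remain, and the surviving arrows are the higher differentials of the statement. Each surviving arrow arises as a zig-zag in the unreduced complex consisting of one non-trivial drop sandwiched between a string of length-preserving cancellations, and the Alexander drops ``$[p]$'', ``$[p-j]$'', and ``$[1]$'' are the sums along these zig-zags. The main obstacle is purely combinatorial: one must carefully enumerate which filtration-preserving arrows cancel which pairs of generators, and then trace each surviving higher differential through the resulting zig-zag to compute its shift. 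The near-symmetry $b_j \leftrightarrow b_{2p-j-1}$ visible in several of the formulas reflects a symmetry of the $(p,1)$-pattern on the solid torus, and exploiting it should cut the case analysis substantially, with the boundary cases $\partial b_1 v_1$ and $\partial b_{p-1} v_2$ handled separately.
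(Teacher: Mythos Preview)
Your plan is essentially the paper's own proof: compute $\widehat{CFA}(p,1) \boxtimes \widehat{CFD}(Y)$ for $Y$ the $0$-framed trefoil complement using the explicit $\mathcal{A}_\infty$ relations on the cabling pattern (from \cite{Homcables}) and the type $D$ structure from \cite[Theorem A.11]{LOT}, then edge-reduce the filtration-preserving arrows and read off the gradings from \cite{Petkova}. One small correction to your description: the type $A$ module for the $(p,1)$-pattern has generators $a, b_1, \ldots, b_{2p-2}$ (not just the $b_j$'s), so the unreduced tensor product also contains $au_1, au_2, au_3$; after cancellation $au_1$ survives and is in fact the distinguished generator of the total homology appearing in Table~\ref{tab:cable}.
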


\begin{table}[htb!]
\vspace{5pt}
\begin{center}
\begin{tabular}{llllll}
\hline
&Generator \qquad \qquad & $(M, A)$ \qquad \qquad \qquad \qquad & $M+2p-2A$ &&\\\hline
&$au_1$ & $(0, p)$ & $0$ &\\
&$b_1v_1$  & $(-1, p-1)$ & $1$ &\\
&$b_1\mu_1$ & $(-2, -1)$ & $2p$ &\\
&$b_jv_2$ & $(-2j-1, -j)$ & $2p-1$ &$1 \leq j \leq p-2$&\\
&$b_{j+1}\mu_1$ \qquad \quad & $(-2j-2, -j-1)$  & $2p$ &$1 \leq j \leq p-2$&\\
&$b_{p-1}v_2 $ & $(-2p+1, -p+1)$ & $2p-1$ &&\\
&$b_pv_2$ & $(-2p, -p)$ & $2p$ & \\
&$b_jv_1$ & $(-1, -j+p)$ & $-1+2j$ &$2 \leq j \leq p-1$&\\
&$b_{2p-1-j}v_1$ & $(-2, 0)$ &  $2p-2$ &$2 \leq j \leq p-1$&\\
&$b_j\mu_2$ & $(0, -j+p)$ & $-2j$ &$1 \leq j \leq p-1$ &\\
&$b_{2p-1-j}\mu_2$ & $(-1, 0)$ & $2p-1$ &$1 \leq j \leq p-1$ &\\\hline
\end{tabular}
\end{center}
\caption{$\widehat{HFK}(T_{2,3;p,1})$}
\label{tab:cable}
\end{table}

\begin{proof}
We use \cite[Theorem 1]{Petkova} to determine $\widehat{HFK}(T_{2,3;p,1})$. We also need the higher differentials (i.e., those that do not preserve Alexander grading) in order to determine the values of $a_1$ and $a_2$, and so we repeat the calculation of $\widehat{HFK}(T_{2,3;p,1})$ below, keeping track of this additional data.

For background on bordered Floer homology, see \cite{LOT} or \cite[Section 2]{Homcables}. We prefer to work with $\Z$-filtered chain complex $\widehat{CFK}$ rather than the $\F[U]$-module $gCFK^-$, and so we use the basepoint conventions described in \cite[Remark 4.2]{Homcables}. In particular, the $\mathcal{A}_\infty$ relations on $\widehat{CFA}$ now each contribute a relation filtration shift, denoted with square brackets.

\begin{figure}[htb!]
\labellist
\small \hair 2pt
\pinlabel $w$ at 50 300
\pinlabel $z$ at 180 135
\pinlabel $0$ at 35 330
\pinlabel $1$ at 320 330
\pinlabel $2$ at 320 40
\pinlabel $3$ at 35 40
\pinlabel $a$ at 18 75
\pinlabel $b_1$ at 85 20
\pinlabel $\ldots$ at 108 20
\pinlabel $b_{p-1}$ at 140 20
\pinlabel $b_p$ at 230 20
\pinlabel $\ldots$ at 251 20
\pinlabel $b_{2p-2}$ at 280 20
\endlabellist
\centering
\includegraphics[scale=1]{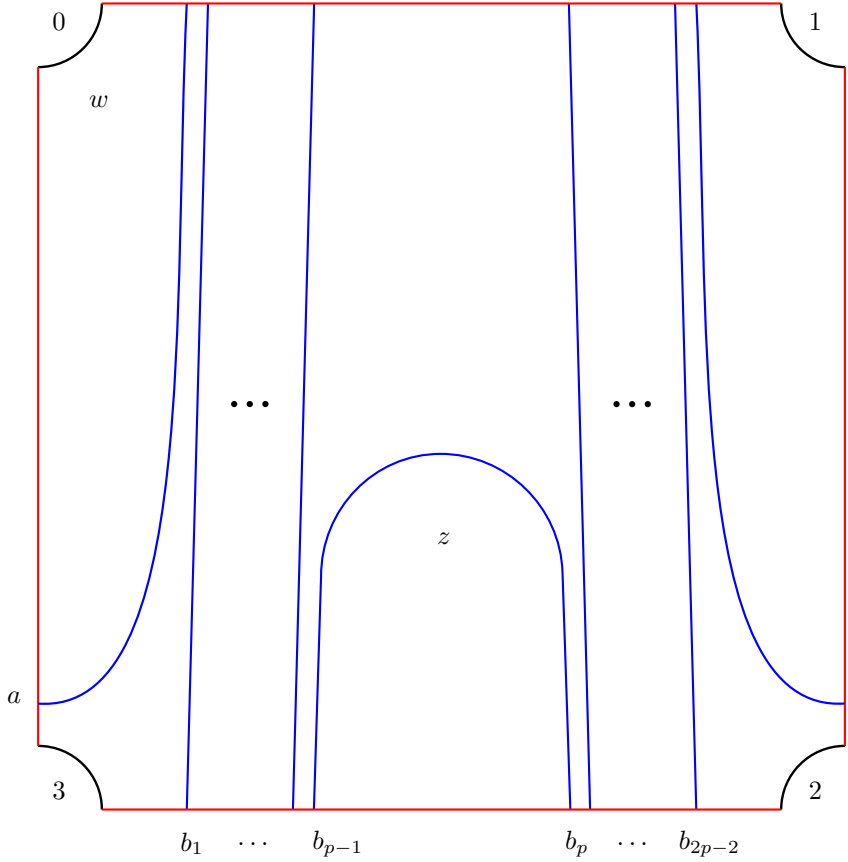}
\caption{A genus one bordered Heegaard diagram $\mathcal{H}(p,1)$ for the $(p,1)$-cable in the solid torus.}
\label{fig:cablepattern}
\end{figure}

We use the notation of \cite{Petkova}, which matches that of \cite{LOT}. Let $\widehat{CFA}(p, 1)$ denote the type $A$ structure associated to the diagram in Figure \ref{fig:cablepattern}. The diagram describes the $(p, 1)$-torus knot in the solid torus, where $p$ denotes the longitudinal winding. There are $2p-1$ generators, $a, b_1, b_2, \ldots, b_{2p-2}$, where $a \cdot \iota_0=a$, and $b_i \cdot \iota_1 = b_i$.
In \cite[Section 4.1]{Homcables}, the $\mathcal{A}_\infty$ relations on $\widehat{CFA}(p, 1)$ are determined to be
\begin{equation*}
\begin{array}{lll}
m_{3+i}(a, \rho_3, \overbrace{\rho_{23}, \ldots, \rho_{23}}^{i}, \rho_2)=a[pi+p], &&i \geq 0 \\
m_{4+i+j}(a, \rho_3, \overbrace{\rho_{23}, \ldots, \rho_{23}}^{i}, \rho_2, \overbrace{\rho_{12}, \ldots, \rho_{12}}^{j}, \rho_1)=b_{j+1}[pi+j+1], && 0\leq j \leq p-2\\
&& i \geq 0\\
m_{2+j}(a, \overbrace{\rho_{12}, \ldots, \rho_{12}}^{j}, \rho_1)=b_{2p-j-2}[0], &&0\leq j \leq p-2 \\
&&\\
m_1(b_j)=b_{2p-j-1}[p-j], &&1\leq j \leq p-1 \\
m_{3+i}(b_j, \rho_2, \overbrace{\rho_{12}, \ldots, \rho_{12}}^{i}, \rho_1)=b_{j+i+1}[i+1], &&1\leq j \leq p-2\\
&& 0\leq i \leq p-j-2 \\
m_{3+i}(b_j, \rho_2, \overbrace{\rho_{12}, \ldots, \rho_{12}}^{i}, \rho_1)=b_{j-i-1}[0], &&p+1\leq j \leq 2p-2\\
&& 0 \leq i \leq j-p-1.\\
\end{array}
\end{equation*}

Let $Y$ denote the $0$-framed complement of the right-handed trefoil. By \cite[Theorem A.11]{LOT}, the type $D$ structure $\widehat{CFD}(Y)$ is as shown in Figure \ref{fig:CFDRHT}, where the generators $u_i$ are in the idempotent $\iota_0$, i.e., $\iota_0 \cdot u_i = u_i$, and the remaining generators are in the idempotent $\iota_1$.

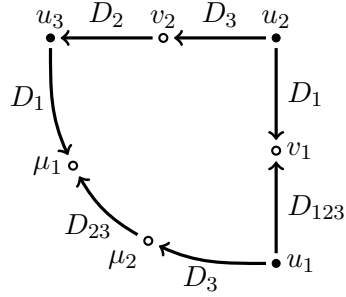
\begin{figure}[htb!]
\begin{tikzpicture}
	\useasboundingbox (-0.5, -0.5) rectangle (3.5, 3.5);
	
        \filldraw (0, 3) circle (1.5pt) node[] (a) {};
        \draw [thick] (1.5, 3) circle (1.5pt) node[] (ab) {};
        \filldraw (3, 3) circle (1.5pt) node[] (b) {};
        \draw [thick] (3, 1.5) circle (1.5pt) node[] (bc) {};        
        \filldraw (3, 0) circle (1.5pt) node[]  (c) {};
        \draw [thick] (0.3, 1.3) circle (1.5pt) node[] (m1) {};
        \draw [thick] (1.3, 0.3) circle (1.5pt) node[] (m2) {};      

         	\draw [very thick, <-] (a) -- (ab) node[midway, above] {$D_{2}$};
	\draw [very thick, <-] (ab) -- (b) node[midway, above] {$D_{3}$};
         	\draw [very thick, ->] (b) -- (bc) node[midway, right] {$D_{1}$};
	\draw [very thick, ->] (c) -- (bc) node[midway, right] {$D_{123}$};
	\draw [very thick, ->] (a) to [out=270, in=115] (m1);
	\draw [very thick, ->] (m2) to [out=150, in=300] (m1);
	\draw [very thick, ->] (c) to [out=180, in=335] (m2);
	\draw (-0.3, 2.2) node {$D_{1}$};
	\draw (0.5, 0.5) node {$D_{23}$};
	\draw (2, -0.2) node {$D_{3}$};
	
	\node [above] at (a) {$u_3$};
	\node [above] at (b) {$u_2$};
	\node [right] at (c) {$u_1$};
	\node [above] at (ab) {$v_2$};
	\node [right] at (bc) {$v_1$};
	\node [left] at (m1) {$\mu_1$};
	\node [below left] at (m2) {$\mu_2$};
\end{tikzpicture}
\centering
\label{fig:CFDRHT}
\caption{$\widehat{CFD}(Y)$, where $Y$ is the $0$-framed complement of the right-handed trefoil.}
\label{}
\end{figure}

The generators and differentials of $\widehat{CFA}(p, 1) \boxtimes \widehat{CFD}(Y)$ are
\begin{equation*}
\begin{array}{ll}
	\partial (au_1) = 0 &\\
	\partial (au_2) = au_3[p] + b_1\mu_1[1] + b_{2p-2}v_1[0] \quad &\\
	\partial (au_3) = b_{2p-2}\mu_1[0] &\\
	\partial (b_jv_1) = b_{2p-j-1}v_1[p-j], & 1 \leq j \leq p-1 \\
	\partial (b_jv_2) = b_{2p-j-1}v_2[p-j] + b_{j+1}\mu_1[1], & 1 \leq j \leq p-2 \\
	\partial (b_{p-1}v_2 = b_pv_2[1] &\\
	\partial (b_j \mu_1) = b_{2p-j-1}\mu_1[p-j], & 1\leq j \leq p-1 \\
	\partial (b_j \mu_2) = b_{2p-j-1}\mu_2[p-j], & 1 \leq j \leq p-1 \\
	\partial (b_pv_1) = 0 &\\
	\partial (b_pv_2) = 0 &\\
	\partial (b_p\mu_1) = 0 &\\
	\partial (b_p\mu_2) = 0 &\\
	\partial (b_jv_1) = 0, & p+1 \leq j \leq 2p-2 \\
	\partial (b_jv_2) = b_{j-1}\mu_1[0], & p+1 \leq j \leq 2p-2 \\
	\partial (b_j\mu_1) = 0, & p+1 \leq j \leq 2p-2 \\
	\partial (b_j\mu_2) = 0, & p+1 \leq j \leq 2p-2. \\
\end{array}
\end{equation*}
The change in Alexander filtration, denoted in square brackets, can be determined from the relative Alexander filtration shifts in the $\mathcal{A}_\infty$ relations on $\widehat{CFA}(p, 1)$.

There is a summand of $\widehat{CFK}(T_{2,3;p,1})$ consisting of the generators
\[ au_2, \ \ au_3, \ \ b_1v_1, \ \ b_1\mu_1, \ \ b_{2p-2}v_1, \ \ b_{2p-2}\mu_1 \]
with the nonzero differentials
\begin{align*}
\partial (au_2) &= au_3[p] + b_1\mu_1[1] + b_{2p-2}v_1[0] \\
\partial (b_1v_1) &= b_{2p-2}v_1[p-1] \\
\partial (b_1\mu_1) &= b_{2p-2}\mu_1[p-1] \\
\partial (au_3) &= b_{2p-2}\mu_1[0].
\end{align*}
See Figure \ref{subfig:summand1a}. We cancel the edge the edge between $au_2$ and $b_{2p-2}v_1$, and the edge between $au_3$ and $b_{2p-2}\mu_1$, which introduces an edge between $b_1v_1$ and $b_1\mu_1$. The summand now consists of
\[ b_1v_1, \ \ b_1\mu_1 \]
with the differential
\[ \partial (b_1v_1) = b_1\mu_1[p].\]
See Figure \ref{subfig:summand1}. Similarly, when $p\geq 3$, there is a summand of $\widehat{CFK}(T_{2,3;p,1})$ consisting of the generators
\[ b_jv_2, \ \ b_{2p-j-1}v_2, \ \ b_{j+1}\mu_1, \ \ b_{2p-j-2}\mu_1 \]
 for $1 \leq j \leq p-2$, with the following nonzero differentials
\begin{align*}
\partial (b_jv_2) &= b_{2p-j-1}v_2[p-1]+b_{j+1}\mu_1[1] \\
\partial (b_{2p-j-1}v_2) &= b_{2p-j-2}\mu_1[0]\\
\partial (b_{j+1}\mu_1) &= b_{2p-j-2}\mu_1[p-2].
\end{align*}
After canceling the edge between $b_{2p-j-1}v_2$ and $b_{2p-j-2}\mu_1$, we reduce the summand to
\[ b_jv_2, \ \ b_{j+1}\mu_1 \]
with the nonzero differential
\[ \partial (b_jv_2) = b_{j+1}\mu_1[1].\]
See Figure \ref{fig:summand2}. The remaining summands of $\widehat{CFK}(T_{2,3;p,1})$ are shown in Figure \ref{fig:summand3}.

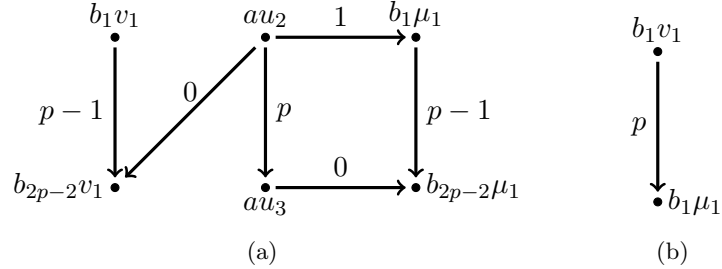
\begin{figure}[htb!]
\subfigure[]{
\begin{tikzpicture}
	\filldraw (0, 2) circle (1.5pt) node[] (au_2) {};
		\node [above] at (au_2) {$au_2$};
	\filldraw (2, 2) circle (1.5pt) node[] (b_1mu_1) {};
		\node [above] at (b_1mu_1) {$b_1\mu_1$};
	\filldraw (2, 0) circle (1.5pt) node[] (b_2p-2mu_1) {};
		\node [right] at (b_2p-2mu_1) {$b_{2p-2}\mu_1$};
	\filldraw (0, 0) circle (1.5pt) node[] (au_3) {};
		\node [below] at (au_3) {$au_3$};
	\filldraw (-2, 0) circle (1.5pt) node[] (b_2p-2v_1) {};
		\node [left] at (b_2p-2v_1) {$b_{2p-2}v_1$};
	\filldraw (-2, 2) circle (1.5pt) node[] (b_1v_1) {};
		\node [above] at (b_1v_1) {$b_1v_1$};
	\draw [very thick, ->] (au_2) -- (b_1mu_1) node[midway, above] {$1$};
	\draw [very thick, ->] (au_2) -- (au_3) node[midway, right] {$p$};
	\draw [very thick, ->] (au_3) -- (b_2p-2mu_1) node[midway, above] {$0$};
	\draw [very thick, ->] (b_1mu_1) -- (b_2p-2mu_1) node[midway, right] {$p-1$};
	\draw [very thick, ->] (au_2) -- (b_2p-2v_1) node[midway, above] {$0$};
	\draw [very thick, ->] (b_1v_1) -- (b_2p-2v_1) node[midway, left] {$p-1$};
\end{tikzpicture}
\label{subfig:summand1a}
}
\hspace{20pt}
\subfigure[]{
\begin{tikzpicture}
	\filldraw (0, 2) circle (1.5pt) node[] (b_1v_1) {};
		\node [above] at (b_1v_1) {$b_1v_1$};
	\filldraw (0, 0) circle (1.5pt) node[] (b_1mu_1) {};
		\node [right] at (b_1mu_1) {$b_1\mu_1$};
	\draw [very thick, ->] (b_1v_1) -- (b_1mu_1) node[midway, left] {$p$};
\end{tikzpicture}
\label{subfig:summand1}
}
\centering
\caption{A summand of $\widehat{CFK}(T_{2,3;p,1})$. Left, before any simplifications. Right, after canceling the differential from $au_3$ to $b_{2p-2}\mu_1$ and the differential from $au_2$ to $b_{2p-2}v_1$. The labels on the arrows indicate the change in filtration.}
\label{fig:summand1}
\end{figure}

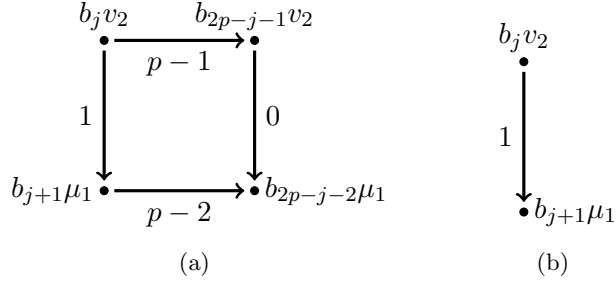
\begin{figure}[htb!]
\subfigure[]{
\begin{tikzpicture}
	\filldraw (0, 2) circle (1.5pt) node[] (b_jv_2) {};
		\node [above] at (b_jv_2) {$b_jv_2$};
	\filldraw (2, 2) circle (1.5pt) node[] (b_2p-j-1v_2) {};
		\node [above] at (b_2p-j-1v_2) {$b_{2p-j-1}v_2$};
	\filldraw (2, 0) circle (1.5pt) node[] (b_2p-j-2mu_1) {};
		\node [right] at (b_2p-j-2mu_1) {$b_{2p-j-2}\mu_1$};
	\filldraw (0, 0) circle (1.5pt) node[] (b_j+1mu_1) {};
		\node [left] at (b_j+1mu_1) {$b_{j+1}\mu_1$};
	\draw [very thick, ->] (b_jv_2) -- (b_2p-j-1v_2) node[midway, below] {$p-1$};
	\draw [very thick, ->] (b_jv_2) -- (b_j+1mu_1) node[midway, left] {$1$};
	\draw [very thick, ->] (b_2p-j-1v_2) -- (b_2p-2mu_1) node[midway, right] {$0$};
	\draw [very thick, ->] (b_j+1mu_1) -- (b_2p-2mu_1) node[midway, below] {$p-2$};
\end{tikzpicture}
}
\hspace{20pt}
\subfigure[]{
\begin{tikzpicture}
	\filldraw (0, 2) circle (1.5pt) node[] (b_jv_2) {};
		\node [above] at (b_jv_2) {$b_jv_2$};
	\filldraw (0, 0) circle (1.5pt) node[] (b_j+1mu_1) {};
		\node [right] at (b_j+1mu_1) {$b_{j+1}\mu_1$};
	\draw [very thick, ->] (b_jv_2) -- (b_j+1mu_1) node[midway, left] {$1$};
\end{tikzpicture}
\label{subfig:summand2}
}
\centering
\caption{A summand of $\widehat{CFK}(T_{2,3;p,1})$, where $1 \leq j \leq p-2$. Left, before any simplifications; right, after.}
\label{fig:summand2}
\end{figure}

\begin{figure}[htb!]
\begin{tikzpicture}
	\filldraw (0, 2) circle (1.5pt) node[] (b_iv_1) {};
		\node [above] at (b_iv_1) {$b_iv_1$};
	\filldraw (0, 0) circle (1.5pt) node[] (b_2p-i-1v_1) {};
		\node [below] at (b_2p-i-1v_1) {$b_{2p-i-1}v_1$};
	\draw [very thick, ->] (b_iv_1) -- (b_2p-i-1v_1) node[midway, left] {$p-i$};
	
	\filldraw (4, 2) circle (1.5pt) node[] (b_jmu_2) {};
		\node [above] at (b_jmu_2) {$b_j\mu_2$};
	\filldraw (4, 0) circle (1.5pt) node[] (b_2p-j-1mu_2) {};
		\node [below] at (b_2p-j-1mu_2) {$b_{2p-j-1}\mu_2$};
	\draw [very thick, ->] (b_jmu_2) -- (b_2p-j-1mu_2) node[midway, left] {$p-j$};
	
	\filldraw (8, 2) circle (1.5pt) node[] (b_p-1v_2) {};
		\node [above] at (b_p-1v_2) {$b_{p-1}v_2$};
	\filldraw (8, 0) circle (1.5pt) node[] (b_pv_2) {};
		\node [below] at (b_pv_2) {$b_pv_2$};
	\draw [very thick, ->] (b_p-1v_2) -- (b_pv_2) node[midway, left] {$1$};
\end{tikzpicture}
\centering
\caption{The remaining summands of $\widehat{CFK}(T_{2,3;p,1})$, where $2 \leq i \leq p-1$ and $1 \leq j \leq p-1$.}
\label{fig:summand3}
\end{figure}
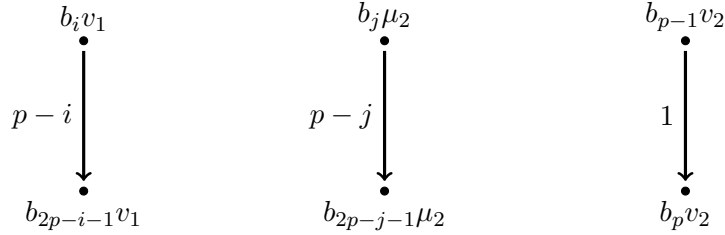

After applying the edge reduction procedure, the nonzero higher differentials on $\widehat{HFK}(T_{2,3;p,1})$ are
\begin{align*}
\partial (b_1v_1) &= b_1\mu_1[p] \\
\partial (b_iv_2)&=  b_{i+1}\mu_1[1], \quad \qquad 1 \leq i \leq p-2 \\
\partial (b_{p-1}v_2) &= b_pv_2[1] \\
\partial (b_iv_1) &= b_{2p-1-i}v_1[p-i], \qquad 2 \leq i \leq p-1 \\
\partial (b_{i}\mu_2 ) &= b_{2p-1-i}\mu_2[p-i], \qquad 1 \leq i \leq p-1,
\end{align*}
as depicted in Figures \ref{subfig:summand1}, \ref{subfig:summand2}, and \ref{fig:summand3}. We determine the gradings in Table \ref{tab:cable} using \cite[Theorem 1]{Petkova}. Due to our choice of basepoint conventions, our gradings differ from those in \cite{Petkova} in the following ways: our Alexander grading $A$ is the negative of Petkova's, and our Maslov grading $M$ is Petkova's $N$. This completes the proof of the proposition.
\end{proof}

The basis for $\widehat{HFK}$ in the above proposition has a particularly simple form. In the language of \cite[Definition 11.25]{LOT}, it is \emph{simplified}; that is, there is at most one arrow starting or ending at each basis element.

\begin{figure}[htb!]
\centering
\vspace{10pt}
\begin{tikzpicture}
	\draw[step=1, black!30!white, very thin] (-3.9, -6.9) grid (3.9, 0.9);
	\begin{scope}[thin, darkgray]
		\draw [<->] (-4, 0) -- (4, 0) node[right](xaxis){$A$};
		\draw [<->] (0, -7) -- (0, 1) node[above](yaxis){$M$};
	\end{scope}
	\filldraw (3, 0) circle (2pt) node[] (au_1){};
	\filldraw (2, -1) circle (2pt) node[] (b_1v_1){};
	\filldraw (-1,-2) circle (2pt) node[] (b_1mu_1){};
	\filldraw (-1, -3) circle (2pt) node[] (b_1v_2){};
	\filldraw (-2, -4) circle (2pt) node[] (b_2mu_1){};
	\filldraw (-2, -5) circle (2pt) node[] (b_2v_2){};
	\filldraw (-3, -6) circle (2pt) node[] (b_3v_2){};
	\filldraw (1, -1) circle (2pt) node[] (b_2v_1){};
	\filldraw (0, -2) circle (2pt) node[] (b_3v_1){};
	\filldraw (2, 0) circle (2pt) node[] (b_1mu_2){};
	\filldraw (1, 0) circle (2pt) node[] (b_2mu_2){};
	\filldraw (0.1, -1.1) circle (2pt) node[] (b_4mu_2){};
	\filldraw (-0.1, -0.9) circle (2pt) node[] (b_3mu_2){};

	\node [above] at (au_1) {$au_1$};
	\node [right] at (b_1v_1) {$b_1v_1$};
	\node [left] at (b_1mu_1) {$b_1\mu_1$};
	\node [right] at (b_1v_2) {$b_1v_2$};
	\node [left] at (b_2mu_1) {$b_2\mu_1$};
	\node [left] at (b_2v_2) {$b_2v_2$};
	\node [left] at (b_3v_2) {$b_3v_2$};
	\node [above right] at (b_2v_1) {$b_2v_1$};
	\node [right] at (b_3v_1) {$b_3v_1$};
	\node [above] at (b_1mu_2) {$b_1\mu_2$};
	\node [left] at (b_2mu_2) {$b_2\mu_2$};
	\node [left] at (b_4mu_2) {$b_4\mu_2$};
	\node [above left] at (b_3mu_2) {$b_3\mu_2$};

	\draw [->] (b_1v_1) -- (b_1mu_1);
	\draw [->] (b_1v_2) -- (b_2mu_1);
	\draw [->] (b_2v_2) -- (b_3v_2);
	\draw [->] (b_2v_1) -- (b_3v_1);
	\draw [->] (b_1mu_2) -- (b_4mu_2);	
	\draw [->] (b_2mu_2) -- (b_3mu_2);	
\end{tikzpicture}
\vspace{10pt}
\caption{$\widehat{CFK}(T_{2,3;3,1})$, where the horizontal axis represents the Alexander grading and the vertical axis represents the homological, or Maslov, grading.}
\label{fig:}
\end{figure}
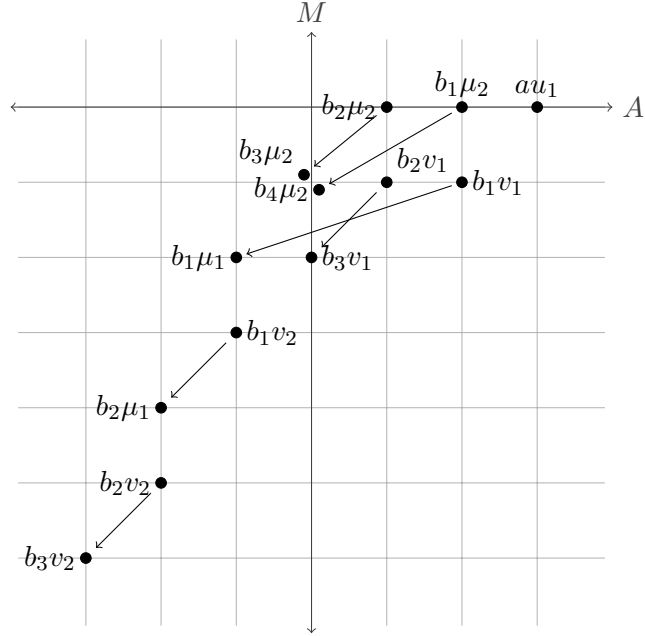

\begin{lemma}
\label{lem:a1a21p}
Let $D$ denote the (positive, untwisted) Whitehead double of the right-handed trefoil, and $D_{p,1}$ its $(p,1)$-cable, $p>1$. Then $a_1(D_{p,1})=1$ and $a_2(D_{p,1})=p$.
\end{lemma}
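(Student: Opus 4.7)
The plan is to leverage $\varepsilon$-equivalence and Proposition \ref{prop:cable} to read off $a_1$ and $a_2$ from the bigraded structure of $\widehat{HFK}(T_{2,3;p,1})$. By \cite[Lemma 6.12]{Homsmooth} the Whitehead double $D$ is $\varepsilon$-equivalent to the right-handed trefoil $T_{2,3}$, and by \cite[Proposition 4]{Homsmooth} $\varepsilon$-equivalence is preserved under satellite operations; hence $D_{p,1}$ is $\varepsilon$-equivalent to $T_{2,3;p,1}$. Because $a_1$ and $a_2$ are invariants of $\varepsilon$-equivalence by \cite[Lemma 6.1]{Homsmooth}, it suffices to extract them from $\CFKi(T_{2,3;p,1})$.

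First I would identify the distinguished generator $x_0$ of Lemma \ref{lem:basis}. In Table \ref{tab:cable}, $au_1$ is the unique element at bigrading $(M, A) = (0, p)$, and Proposition \ref{prop:cable} lists no higher differential involving it; hence $au_1$ represents the generator of $\widehat{HF}(S^3) \cong \F$, so $\tau(T_{2,3;p,1}) = p$ and $x_0 = au_1$.

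Next I would pin down $a_1$ via a bigrading argument. A horizontal arrow of length $a_1$ from $x_1$ to $x_0$ in $\CFKi$ corresponds to a summand $U^{a_1} x_0$ of $\partial x_1$, which forces $M(x_1) = 1 - 2a_1$ and $A(x_1) = p - a_1$. A scan of Table \ref{tab:cable} shows that the only entry of the form $(1 - 2k, p - k)$ for $k \geq 1$ is $b_1 v_1$ at $(-1, p - 1)$, corresponding to $k = 1$: every generator with Maslov grading $\leq -3$ in the table has Alexander grading $\leq 0$, which for $p > 1$ forbids bigrading $(1 - 2k, p - k)$ for $k \geq 2$. Since $\varepsilon(T_{2,3;p,1}) = 1$ (inherited through $\varepsilon$-equivalence from $T_{2,3}$), the invariant $a_1$ is well-defined and Lemma \ref{lem:basis} produces some such $x_1$, forcing $a_1 = 1$ and $x_1 = b_1 v_1$. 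The value of $a_2$ then follows immediately: Proposition \ref{prop:cable} lists the unique vertical higher differential from $b_1 v_1$ as $\partial(b_1 v_1) = b_1 \mu_1[p]$, so $x_2 = b_1 \mu_1$ at bigrading $(-2, -1)$ and $a_2 = A(x_1) - A(x_2) = (p - 1) - (-1) = p$.

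The main obstacle is that the bordered Floer computation in Proposition \ref{prop:cable} records only the vertical higher differentials of $\widehat{CFK}$, so the existence of the horizontal arrow cannot be read off directly. The substitute is a uniqueness argument: combining Maslov-grading constraints with the existence guarantee from $\varepsilon(T_{2,3;p,1}) = 1$ pins down both $x_1$ and $a_1$, at the cost of carefully inspecting each of the families in Table \ref{tab:cable} to rule out competing candidates at low Maslov grading, in particular the edge cases $k = p-1, p$ coming from the lowest Maslov generators $b_{p-1}v_2$ and $b_p v_2$.
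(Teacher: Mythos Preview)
Your proposal is correct and follows essentially the same strategy as the paper's proof: reduce to $T_{2,3;p,1}$ via $\varepsilon$-equivalence, identify $au_1$ as the distinguished generator $x_0$, and then use a Maslov-grading uniqueness argument to pin down $x_1$ as (a $U$-translate of) $b_1v_1$, from which $a_1=1$ and $a_2=p$ follow. The paper phrases the uniqueness step as translating all generators into the row $j=\tau=p$ and reading off the column $M+2p-2A$ of Table~\ref{tab:cable} to find the unique element of Maslov grading one there, whereas you scan directly for bigradings $(1-2k,p-k)$; these are the same linear constraint $M-2A=1-2p$ together with $A<p$, so the arguments coincide.
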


\begin{proof}
The knot $T_{2,3;p,1}$ is $\varepsilon$-equivalent to $D_{p,1}$, so we will study $\CFKi(T_{2,3;p,1})$ instead of $\CFKi(D_{p,1})$.

By \cite[Theorem 2]{Homcables}, we know that $\varepsilon(T_{2,3;p,1})=1$. By Proposition \ref{prop:cable}, we know that $au_1$ is a generator of the total homology $H_*(\widehat{CFK}(T_{2,3;p,1}))$.
We will now find a basis satisfying the conditions in Lemma \ref{lem:basis}, and in doing so, will determine the values of $a_1(T_{2,3;p,1})$ and $a_2(T_{2,3;p,1})$. In order to accomplish this, we will need to find an element whose horizontal boundary in $\CFKi(T_{2,3;p,1})$ is $au_1$.

We will view the $\Z \oplus \Z$-filtered chain complex $\CFKi(K)$ in the $(i,j)$-plane. The complex $\CFKi(K)$ is filtered chain homotopic to a complex generated over $\F[U, U^{-1}]$ by $\widehat{HFK}(K)$, and thus the complex $\widehat{HFK}(K)$ can be viewed as the subquotient complex of $\CFKi(K)$ consisting of elements with $i$-coordinate equal to zero. See Figure \ref{subfig:vertical}. We place a generator $x$ at the lattice point $(0, A(x))$, where $A(x)$ denotes the Alexander grading of $x$. For example, the generator $b_1v_1$ has coordinates $(0, p-1)$. Multiplication by $U$ decreases both the $i$- and $j$-coordinates by one. The Maslov grading is suppressed from the picture, although we will still keep track of it, and recall that an element $U^n \cdot x$ has $(i,j)$-coordinates $(-n, A(x)-n)$, and Maslov grading $M(x)-2n$..  

We would like to find an element with $j$-coordinate equal to $\tau(T_{2,3;p,1})$ whose horizontal boundary is equal to $au_1$. In particular, we would like to find an element with $j$-coordinate equal to $p$, $i$-coordinate greater than zero, and Maslov grading one, which is one more than the Maslov grading of $au_1$. To find the elements with $j$-coordinate equal to $p$, we view the appropriate $U$-translates of elements in $\widehat{HFK}(K)$. More specifically, given a generator $x$ of $\widehat{HFK}(K)$, the translate $U^{A(x)-p} \cdot x$ will be in the $p^{\textup{th}}$-row, with
\[ A(U^{A(x)-p} \cdot x)=p \qquad \textup{and} \qquad M(U^{A(x)-p} \cdot x)=M(x)+2p-2A(x).\]
See Figure \ref{subfig:horizontal}. By considering the gradings in the third column of Table \ref{tab:cable}, which are the Maslov gradings of the elements in the  $p^{\textup{th}}$-row, we see that the only element in that row with Maslov grading one is $U^{-1} \cdot b_1v_1$.

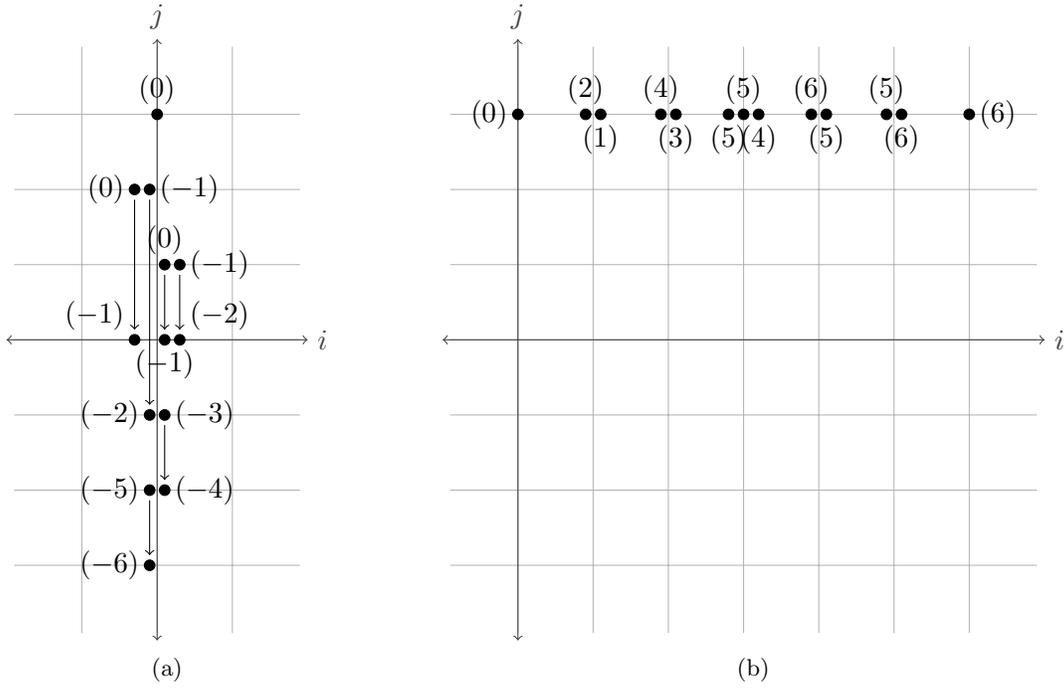
\begin{figure}[htb!]
\centering
\vspace{10pt}
\subfigure[]{
\begin{tikzpicture}
	\draw[step=1, black!30!white, very thin] (-1.9, -3.9) grid (1.9, 3.9);
	\begin{scope}[thin, darkgray]
		\draw [<->] (-2, 0) -- (2, 0) node[right](xaxis){$i$};
		\draw [<->] (0, -4) -- (0, 4) node[above](yaxis){$j$};
	\end{scope}
	\filldraw (0, 3) circle (2pt) node[] (au_1){};
		\node [above] at (au_1) {$(0)$};
		
	\filldraw (-0.1, 2) circle (2pt) node[] (b_1v_1){};
		\node [right] at (b_1v_1) {$(-1)$}; 
	\filldraw (-0.1, -1) circle (2pt) node[] (b_1mu_1){};
		\node [left] at (b_1mu_1) {$(-2)$};
	\draw [->] (b_1v_1) -- (b_1mu_1);
		
	\filldraw (-0.3, 2) circle (2pt) node[] (b_1mu_2){};
		\node [left] at (b_1mu_2) {$(0)$};
	\filldraw (-0.3, 0) circle (2pt) node[] (b_4mu_2){};
		\node [above left] at (b_4mu_2) {$(-1)$};
	\draw [->] (b_1mu_2) -- (b_4mu_2);	
	
	\filldraw (0.1, 1) circle (2pt) node[] (b_2mu_2){};
		\node [above] at (b_2mu_2) {$(0)$};
	\filldraw (0.1, 0) circle (2pt) node[] (b_3mu_2){};
		\node [below] at (b_3mu_2) {$(-1)$};
	\draw [->] (b_2mu_2) -- (b_3mu_2);
	
	\filldraw (0.3, 1) circle (2pt) node[] (b_2v_1){};
		\node [right] at (b_2v_1) {$(-1)$};
	\filldraw (0.3, 0) circle (2pt) node[] (b_3v_1){};
		\node [above right] at (b_3v_1) {$(-2)$};
	\draw [->] (b_2v_1) -- (b_3v_1);

	\filldraw (0.1, -1) circle (2pt) node[] (b_1v_2){};
		\node [right] at (b_1v_2) {$(-3)$};
	\filldraw (0.1, -2) circle (2pt) node[] (b_2mu_1){};
		\node [right] at (b_2mu_1) {$(-4)$};
	\draw [->] (b_1v_2) -- (b_2mu_1);
	
	\filldraw (-0.1, -2) circle (2pt) node[] (b_2v_2){};
		\node [left] at (b_2v_2) {$(-5)$};
	\filldraw (-0.1, -3) circle (2pt) node[] (b_3v_2){};
		\node [left] at (b_3v_2) {$(-6)$};
	\draw [->] (b_2v_2) -- (b_3v_2);

\end{tikzpicture}
\label{subfig:vertical}
}
\hspace{25pt}
\subfigure[]{
\begin{tikzpicture}
	\draw[step=1, black!30!white, very thin] (-.9, -3.9) grid (6.9, 3.9);
	\begin{scope}[thin, darkgray]
		\draw [<->] (-1, 0) -- (7, 0) node[right](xaxis){$i$};
		\draw [<->] (0, -4) -- (0, 4) node[above](yaxis){$j$};
	\end{scope}
	\filldraw (0, 3) circle (2pt) node[] (a){};
		\node [left] at (a) {$(0)$};
	\filldraw (0.9, 3) circle (2pt) node[] (b){};
		\node [above] at (b) {$(2)$};
	\filldraw (1.1, 3) circle (2pt) node[] (c){};
		\node [below] at (c) {$(1)$};
	\filldraw (1.9, 3) circle (2pt) node[] (d){};
		\node [above] at (d) {$(4)$};
	\filldraw (2.1, 3) circle (2pt) node[] (e){};
		\node [below] at (e) {$(3)$};
	\filldraw (2.8, 3) circle (2pt) node[] (f){};
		\node [below] at (f) {$(5)$};
	\filldraw (3, 3) circle (2pt) node[] (g){};
		\node [above] at (g) {$(5)$};
	\filldraw (3.2, 3) circle (2pt) node[] (h){};
		\node [below] at (h) {$(4)$};
	\filldraw (3.9, 3) circle (2pt) node[] (i){};
		\node [above] at (i) {$(6)$};
	\filldraw (4.1, 3) circle (2pt) node[] (j){};
		\node [below] at (j) {$(5)$};
	\filldraw (4.9, 3) circle (2pt) node[] (k){};
		\node [above] at (k) {$(5)$};
	\filldraw (5.1, 3) circle (2pt) node[] (l){};
		\node [below] at (l) {$(6)$};
	\filldraw (6, 3) circle (2pt) node[] (m){};
		\node [right] at (m) {$(6)$};

\end{tikzpicture}
\label{subfig:horizontal}
}
\vspace{10pt}
\caption{Left, the complex $\widehat{HFK}(T_{2,3;3,1})$, in the $(i, j)$-plane, with the vertical higher differentials. Right, the $U$-translates of the complex $\widehat{HFK}(T_{2,3;3,1})$ to the $j=3$ row. The numbers in parentheses indicate the Maslov gradings of the generators.}
\label{fig:}
\end{figure}

Thus, by grading considerations, we have concluded that $au_1$ is the horizontal boundary of $U^{-1}\cdot b_1v_1$. The vertical boundary of $U^{-1} \cdot b_1v_1$ is $U^{-1} \cdot b_1\mu_1$, and 
\[ A(U^{-1} \cdot b_1v_1)  = A(U^{-1} \cdot b_1\mu_1) +p. \]
It follows that
\[a_1(T_{2,3;p,1})=1 \qquad \textup{and} \qquad a_2(T_{2,3;p,1})=p, \]
and since $T_{2,3;p,1}$ and $D_{p,1}$ are $\varepsilon$-equivalent, the result follows.
\end{proof}

\begin{figure}[htb!]
\centering
\vspace{10pt}
\begin{tikzpicture}
	\begin{scope}[thin, gray]
		\draw [<->] (-1, 0) -- (3, 0);
		\draw [<->] (0, -1) -- (0, 5);
	\end{scope}
	\filldraw (0, 3) circle (2pt) node[] (x_0){};
	\filldraw (1, 3) circle (2pt) node[] (x_1){};
	\filldraw (1, 0) circle (2pt) node[] (x_2){};
	\draw [very thick, <-] (x_0) -- (x_1);
	\draw [very thick, <-] (x_2) -- (x_1);
	\node [left] at (x_0) {$au_1$};
	\node [right] at (x_1) {$U^{-1} b_1v_1$};
	\node [below] at (x_2) {$U^{-1} b_1\mu_1$};
\end{tikzpicture}
\vspace{10pt}
\caption{The elements of interest in the proof of Lemma \ref{lem:a1a21p}.}
\label{fig:}
\end{figure}
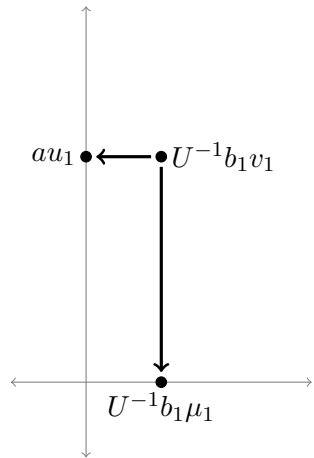

We are now ready to prove Theorem \ref{thm:theknots}, giving an infinite family of topologically slice knots with $4$-ball genus one and arbitrarily large concordance genus.

\begin{proof}[Proof of Theorem \ref{thm:theknots}]
By Lemma \ref{lem:a1a21p},
\[ a_1(D_{p,1})=1 \qquad \textup{and} \qquad a_2(D_{p,1})=p. \]
In the proof of \cite[Lemma 6.4]{Homsmooth}, it is shown that given knots $J$ and $K$, if $a_1(J)=a_1(K)$ and $a_2(J)>a_2(K)$, then
\[ a_1(J \# -K)=a_1(J) \qquad \textup{and} \qquad a_2(J \# -K)=a_2(J). \]
In particular,
\[ a_1(D_{p,1} \# -D_{p-1,1})=1 \qquad \textup{and} \qquad a_2(D_{p,1} \# -D_{p-1,1})=p. \]

In the beginning of this section, it was observed that the knots $D_{p,1} \# -D_{p-1,1}$ are topologically slice, and in Lemma \ref{lem:4genus1}, we saw that $g_4(D_{p,1} \# -D_{p-1,1})=1$.

By Theorem \ref{thm:gamma} and Lemma \ref{lem:gamma}, we see that
\begin{align*}
g_c(D_{p,1} \# -D_{p-1,1}) &\geq |\tau(D_{p,1} \# -D_{p-1,1})-a_1(D_{p,1} \# -D_{p-1,1})-a_2(D_{p,1} \# -D_{p-1,1})| \\
&= |1-1-p| \\
&= p,
\end{align*}
completing the proof of the theorem.
\end{proof}

\bibliographystyle{amsalpha}
\bibliography{mybib}

\end{document}